\documentclass[11pt,a4paper]{article}

\usepackage{amsmath,amssymb,latexsym}
\usepackage{mathtools}
\usepackage{mathrsfs}
\usepackage{graphicx}
\usepackage{epsfig}
\usepackage{longtable}
\usepackage{amsthm}
\usepackage{xcolor}
\usepackage{hyperref}
\usepackage{tikz}
\usepackage[all]{xy}
\usepackage{booktabs}

\usepackage{lineno}

\newtheorem{theorem}{Theorem}
\newtheorem{lemma}[theorem]{Lemma}
\newtheorem{corollary}[theorem]{Corollary}
\newtheorem{proposition}[theorem]{Proposition}
\newtheorem{definition}{Definition}

\newtheorem{remark}{Remark}

\begin{document}

\title{ \Large\bf New Baire category results for stochastic orders on bivariate copulas}

\author{María del Rosario Rodríguez-Griñolo$^{\rm a}$, Manuel Úbeda-Flores$^{\rm b,}$\footnote{Corresponding author}\medskip
\\
\small{$^{\rm a}$Department of Economics, Quantitative Methods and Economic History, Pablo de Olavide University,}\\
\small{41013 Seville, Spain}\\
\small{\texttt{mrrodgri@upo.es}}\\
\small{$^{\rm b}$Department of Mathematics, University of Almería, 04120 Almería, Spain}\\
\small{\texttt{mubeda@ual.es}}
}
\maketitle

\begin{abstract}
In the sense of Baire categories, we prove that the set of pairs of bivariate copulas that are comparable---in either direction---under the increasing convex order is nowhere dense in the space of all pairs of bivariate copulas equipped with the uniform metric. As a consequence, a topologically generic pair of bivariate copulas is not comparable under this order. We further extend the Baire-category programme to two additional stochastic orders on the space of bivariate copulas: the bivariate convex order and the stop-loss order on the sum of the components. For each of these orders, we establish that the set of comparable pairs is closed and nowhere dense, and we show that a topologically generic pair of bivariate copulas is simultaneously incomparable in all three orders. These results complement those obtained in~[F. Durante, J. Fern\'andez-S\'anchez, C. Ignazzi (2022). Baire category results for stochastic orders. \emph{Rev. Real Acad. Cienc. Exactas Fis. Nat. Ser. A-Mat.} {\bf 116}, article 188] for the lower orthant order on copulas.
\end{abstract}

\medskip\noindent
\textbf{Keywords:}
Baire category; Copula; Increasing convex order; Stochastic order; Stop-loss order on sums; Weak convergence.
 
\medskip\noindent
\textbf{Mathematics Subject Classification (2020):}
60E15, 62H05, 54E52.

\section{Introduction}
\label{sec:introduction}

Stochastic orders provide a systematic framework for comparing probability distributions, and constitute one of the most active and fruitful research areas in probability theory and mathematical statistics. The monographs ~\cite{MuellerStoyan2002,ShakedShanthikumar2007} offer comprehensive surveys of the subject; we refer  to these works for background. Among the most studied partial orders on spaces of distribution functions are the usual stochastic order $\le_{\mathrm{st}}$, the increasing convex order $\le_{\mathrm{icx}}$, the mean residual life order $\le_{\mathrm{mrl}}$, and---in the
multivariate setting---the lower orthant order
$\le_{\mathrm{lo}}$ and the supermodular order
$\le_{\mathrm{sm}}$.
 
Given a partial order $\le_{*}$ and two distribution functions $F$ and $G$, a natural and practically relevant question is whether $F$
and $G$ are comparable, i.e., whether $F\le_{*}G$ or $G\le_{*}F$ holds. This question has direct relevance in actuarial science and
quantitative risk management, where $F$ and $G$ typically model the distributions of losses or risks
(see~\cite{MuellerStoyan2002,ShakedShanthikumar2007}). Despite the theoretical importance of stochastic orders, the question of how many pairs of distribution functions are actually
comparable---in a precise topological or measure-theoretic sense---has received limited attention in the literature.
 
A topological approach to this question, based on the notion of Baire categories, was recently developed in
\cite{DuranteFernandezIgnazzi2022}.
Recall that, in a topological space, a set is meager (or of first category) if it can be expressed as a countable union of nowhere dense sets, and co-meager (or residual) if its
complement is meager~\cite{Oxtoby1980}.
In a complete metric space, co-meager sets are large in the topological sense: by the Baire Category Theorem, they are of second category, and any countable intersection of co-meager sets in a complete metric space remains co-meager. Elements belonging to a co-meager set are called generic (or typical).
 
In~\cite{DuranteFernandezIgnazzi2022}, the authors proved that the set of all pairs of univariate distribution functions (defined on a bounded subset of $\mathbb{R}$) that are
comparable in the usual stochastic order $\le_{\mathrm{st}}$, the increasing convex order $\le_{\mathrm{icx}}$, or the mean residual life order $\le_{\mathrm{mrl}}$ is nowhere dense (and, {\it a fortiori}, meager) in the space $\mathcal{F}\times\mathcal{F}$ equipped with the Lévy metric. They also established an analogous result in the class of bivariate copulas $\mathcal{C}$: the set of $\le_{\mathrm{lo}}$-comparable pairs of copulas is nowhere dense in
$(\mathcal{C}\times\mathcal{C}, d\times d)$, where $d$ denotes the uniform metric.
 
The present paper addresses a natural and open question left by
\cite{DuranteFernandezIgnazzi2022}: \emph{can similar Baire-category results be established for the increasing convex order directly on the space of bivariate copulas?} This order is richer and more difficult to handle than $\le_{\mathrm{lo}}$, since it involves the full probability measure induced by the copula---not merely its pointwise values---and it captures genuinely different aspects of stochastic dominance in the multivariate setting.
 
We prove that the set
\[
\mathcal{Q}=\bigl\{(A,B)\in\mathcal{C}\times\mathcal{C}:
(A\le_{\mathrm{icx}}B)\vee(B\le_{\mathrm{icx}}A)\bigr\}
\]
(also denoted as $\mathcal{P}_{icx}$) is nowhere dense in $(\mathcal{C}\times\mathcal{C},d\times d)$
(Theorem~\ref{thm:icx_nowhere_dense}). The proof strategy follows the general Baire-category framework of~\cite{DuranteFernandezIgnazzi2022}, reusing the geometric construction of~\cite[Theorem~6]{DuranteFernandezIgnazzi2022}, but requiring substantially new arguments to handle the non-pointwise nature of $\le_{\rm icx}$. The key difficulty with $\le_{\mathrm{icx}}$ on copulas, compared to $\le_{\mathrm{lo}}$, is twofold. The order $\le_{\mathrm{icx}}$ cannot be verified pointwise: it involves integration against the entire class $\Phi_{\mathrm{icx}}$ of increasing convex functions. The class of increasing convex functions on $[0,1]^2$ does not separate copulas as cleanly as monotone functions do (since many elements of $\Phi_{\mathrm{icx}}$ integrate to the same value for all copulas with uniform marginals).

Beyond the increasing convex order, we establish analogous results for two further stochastic orders on $\mathcal{C}\times\mathcal{C}$
(Section~\ref{sec:otherorders}). For the \emph{bivariate convex order} $\le_{\mathrm{\rm cx}}$, we establish the same conclusion (Proposition~\ref{prop:cx}). For the \emph{stop-loss order on the sum of the components}
$\le_{\mathrm{sl}}$ —that is, the (univariate) increasing convex order applied to the marginal sums $S_A=X+Y$ and $S_B=X'+Y'$ associated,
respectively, to $(X,Y)\sim A$ and $(X',Y')\sim B$, where ``$\sim$'' denotes ``has joint distribution function''— we again prove that the
set of comparable pairs is nowhere dense
(Proposition~\ref{prop:lorenz}). Combining these results, Theorem~\ref{thm:omnibus34} shows that a topologically generic pair of bivariate copulas is simultaneously incomparable in all three orders $\le_{\mathrm{icx}}, \le_{\mathrm{\rm cx}}, \le_{\mathrm{\rm sl}}$.

The paper is organised as follows.
Section~\ref{sec:preliminaries} collects the necessary background on copulas, stochastic
orders, and Baire categories.
Section~\ref{sec:icx_copulas} contains the main results for the increasing convex order:
the closedness of $\le_{\mathrm{icx}}$ in $\mathcal{C}$, and the nowhere-denseness of $\mathcal{Q}$. Section~\ref{sec:otherorders} extends the Baire-category programme to the convex and stop-loss orders on bivariate copulas. Section~\ref{sec:conc} is devoted to conclusions.
 
\section{Preliminaries} \label{sec:preliminaries}
 
We collect in this section the definitions and background results needed throughout the paper. The section is divided into three parts: copula theory
(Section~\ref{sec:copulas}), stochastic orders
(Section~\ref{sec:stoc}), and Baire category theory (Section~\ref{sec:bairecat}).
Standard references for copula theory
are~\cite{DuranteSempi2016,Nelsen2006}.
For stochastic orders we refer to~\cite{MuellerStoyan2002,ShakedShanthikumar2007}. For Baire categories the standard reference is~\cite{Oxtoby1980}.
 
\subsection{Copulas}\label{sec:copulas}

 We begin by recalling the definition and basic properties of bivariate copulas. A (bivariate) \emph{copula} is a function $C: [0, 1]^2 \to [0, 1]$ that satisfies:
\begin{itemize}
\item[(C1)] \emph{Boundary conditions}: For every $t \in [0, 1]$, $C(t, 0) = C(0, t) = 0$ and $C(t, 1) = C(1, t) = t$.
\item[(C2)] \emph{The $2$-increasing property}: For any $u, v, u', v' \in [0, 1]$ such that $u \le u'$ and $v \le v'$, $
C(u', v') - C(u, v') - C(u', v) + C(u, v) \geq 0$.
\end{itemize}

According to Sklar's theorem \cite{Sklar1959}, for any bivariate random vector $(X,Y)$ with joint distribution function $H$ and marginals $F$ and $G$, there exists a copula $C$ (which is uniquely determined on ${\rm Range}\,
F\,\times{\rm Range}\,G\,$) such that $
H(x,y) = C(F(x),G(y))$ for all $(x,y)\in[-\infty,+\infty]^2$; moreover, if the marginals are continuous, then the copula is unique (for a complete proof, see \cite{Ubeda2017}). This copula captures the entire dependence structure between $X$ and $Y$, independently of the marginal effects. Essentially, $C$ corresponds to a bivariate distribution function on $[0, 1]^2$ with uniform marginals.

The \emph{Fréchet class} $\mathcal{F}(F_{1},F_{2})$ denotes the set of all bivariate distribution functions with fixed marginals $F_{1}$ and $F_{2}$; by Sklar's theorem it is in bijection with $\mathcal{C}$.
 
The class $\mathcal{C}$ is equipped with the \emph{uniform metric}
\begin{equation*}\label{eq:uniform_metric}
  d(A,B) \;:=\; \sup_{(x,y)\in[0,1]^{2}}|A(x,y)-B(x,y)|,
\end{equation*}
which metrizes uniform convergence on $[0,1]^{2}$. Within $\mathcal{C}$, uniform convergence is equivalent to weak convergence of the associated probability measures
(see~\cite{DuranteSempi2016,Sempi2004}). We equip $\mathcal{C}\times\mathcal{C}$ with the maximum product metric
\[
  (d\times d)\bigl((A,B),(A',B')\bigr) := \max\{d(A,A'),\, d(B,B')\}.
\]
Since $(\mathcal{C},d)$ is complete and compact \cite{DuranteSempi2016}, so is $(\mathcal{C}\times\mathcal{C},d\times d)$,
and the open balls of $d\times d$ are exactly the products $B_d(A,\varepsilon)\times B_d(B,\varepsilon)$.
 
Every copula $C\in\mathcal{C}$ satisfies the pointwise bounds
\begin{equation*}\label{eq:frechet_bounds}
  W(x,y) \;\le\; C(x,y) \;\le\; M(x,y)
  \quad\text{for every }(x,y)\in[0,1]^{2},
\end{equation*}
where the \emph{Fréchet-Hoeffding bounds} are
$W(x,y):=\max(x+y-1,0)$ and $M(x,y):=\min(x,y)$.
Both $W$ and $M$ are themselves copulas: $M$ is the copula of the comonotonic pair $(U,U)$ and $W$ is the copula of the counter-monotonic pair $(U,1-U)$ for $U\sim\mathrm{Uniform}(0,1)$.
 
Every copula satisfies a \emph{1-Lipschitz condition} with respect
to the $\ell^{1}$ norm:
\begin{equation*}\label{eq:Lipschitz}
  |C(x_{1},y_{1})-C(x_{2},y_{2})|
  \;\le\; |x_{1}-x_{2}|+|y_{1}-y_{2}|
\end{equation*}
for every $(x_{1},y_{1}),(x_{2},y_{2})\in[0,1]^{2}$.
 
The \emph{diagonal section} of a copula $C$ is the function $\delta_{C}\colon[0,1]\to[0,1]$ defined by $\delta_{C}(t):=C(t,t)$. It satisfies $\delta_{C}(t)\le t$ for every $t\in[0,1]$.
 
Ordinal sums of copulas provide a flexible construction tool
\cite{DuranteSempi2016,DuranteKlementSamingerSempi2022}. Given copulas $C_1,\ldots,C_n$ and a partition of $[0,1]$ into consecutive subintervals $[a_0,a_1],[a_1,a_2],\ldots,[a_{n-1},a_n]$ with $0=a_0<a_1<\cdots<a_n=1$, the \emph{ordinal sum} (denoted $\oplus$) is the copula defined by
\begin{equation*}\label{eq:ordinalsum}
C(x,y) \;=\; a_{k-1} + (a_k-a_{k-1})\,C_k\!\left(
   \frac{x-a_{k-1}}{a_k-a_{k-1}},\;\frac{y-a_{k-1}}{a_k-a_{k-1}}
\right)
\quad\text{if }(x,y)\in(a_{k-1},a_k)^2,
\end{equation*}
and $C(x,y)=M(x,y)$ at all other points of $[0,1]^2$. We write $(C_1,[a_0,a_1])\oplus(C_2,[a_1,a_2])\oplus\cdots
        \oplus(C_n,[a_{n-1},a_n])$, where each summand
$(C_k,[a_{k-1},a_k])$ specifies that the copula $C_k$ is rescaled affinely to the subsquare $[a_{k-1},a_k]^2$ and then glued to the comonotonic copula $M$ outside those subsquares. The value $M(x,y)$ outside the diagonal blocks $[a_{k-1},a_k]^2$ ensures that 
$C$ has the correct boundary behaviour and uniform marginals; the 2-increasing 
property throughout $[0,1]^2$ follows from the general theory of ordinal 
sums~\cite{DuranteKlementSamingerSempi2022}. Note that $\delta_C(a_k)=a_k$ for every $k$, i.e., the ordinal sum touches the main diagonal at each breakpoint $a_k$.

\subsection{Stochastic orders}\label{sec:stoc}

We introduce the stochastic orders that appear in the paper. We first recall the classical univariate orders on the space of distribution functions on $[0,1]$, which provide context for the bivariate setting. The multivariate orders directly studied here are defined afterwards.
 
Let $\mathcal{F}$ denote the class of all right-continuous distribution functions $F\colon[0,1]\to[0,1]$ satisfying $F(0^{+})\ge 0$ and $F(1)=1$. For $F\in\mathcal{F}$, we write $\overline{F}:=1-F$ for the associated survival function, and we denote by
$$m_{F}(x):=\frac{1}{\overline{F}(x)}\int_{x}^{1}\overline{F}(t)\,{\rm d}t$$
the \emph{mean residual life function} for  $x\in[0,1]$ with $\overline{F}(x)>0$, and by convention $m_F(x):=0$ when $\overline{F}(x)=0$.
 
\begin{definition}
\label{def:univariate_orders}
Let $F,G\in\mathcal{F}$.
\begin{enumerate}
  \item $F\le_{\mathrm{st}} G$ \emph{(usual stochastic order)} if
        $\overline{F}(x)\le\overline{G}(x)$ for every $x\in[0,1]$.
  \item $F\le_{\mathrm{icx}} G$ \emph{(increasing convex order)} if
$$\int_{0}^1\varphi(x)\,{\rm d}F(x)\le\int_{0}^1\varphi(x)\,{\rm d}G(x)$$
        for every increasing convex function
        $\varphi\colon[0,1]\to\mathbb{R}$.
        Equivalently,
        $F\le_{\mathrm{icx}} G$ if, and only if,
\begin{equation*}\label{eq:icx_univariate_char}
          \int_{t}^{1}\overline{F}(x)\,{\rm d}x\le
          \int_{t}^{1}\overline{G}(x)\,{\rm d}x
          \quad\text{for every }t\in[0,1].
        \end{equation*}
  \item $F\le_{\mathrm{mrl}} G$ \emph{(mean residual life order)} if
        $m_{F}(x)\le m_{G}(x)$ for every $x\in[0,1]$ such that
        $\overline{F}(x)>0$ and $\overline{G}(x)>0$.
\end{enumerate}
\end{definition}
 
The following implications are standard:
$F\le_{\mathrm{st}} G\Rightarrow F\le_{\mathrm{icx}} G$ and
$F\le_{\mathrm{mrl}} G\Rightarrow F\le_{\mathrm{icx}} G$
(see~\cite{DuranteFernandezIgnazzi2022,ShakedShanthikumar2007}).
In general, $\le_{\mathrm{st}}$ and $\le_{\mathrm{mrl}}$ are not
comparable~\cite{ShakedShanthikumar2007}.
 
The next bivariate stochastic orders for copulas are the main objects of study in this paper.
 
\begin{definition}\label{def:multivariate_orders}
Let $A,B\in\mathcal{C}$.
\begin{enumerate}
  \item $A\le_{\mathrm{lo}} B$ \emph{(lower orthant order)} if
        $B(x,y)\le A(x,y)$ for every $(x,y)\in[0,1]^{2}$.
  \item $A\le_{\mathrm{sm}} B$ \emph{(supermodular order)} if
$$\int_{[0,1]^{2}}\varphi(x,y)\,{\rm d}A(x,y)\le\int_{[0,1]^{2}}\varphi(x,y)\,{\rm d}B(x,y)$$
        for every supermodular function
        $\varphi\colon[0,1]^{2}\to\mathbb{R}$, i.e., every
        $\varphi$ satisfying
$\varphi(x_{1},y_{1})+\varphi(x_{2},y_{2})\ge
        \varphi(\min\{x_{1},x_{2}\},\min\{y_{1}, y_{2}\})+
        \varphi(\max\{x_{1},x_{2}\},\max\{y_{1},y_{2}\})$
        for all $x_{1},x_{2},y_{1},y_{2}\in[0,1]$.
  \item $A\le_{\mathrm{icx}} B$ \emph{(increasing convex order)} if
$$\int_{[0,1]^{2}}\varphi(x,y)\,{\rm d}A(x,y)\le\int_{[0,1]^{2}}\varphi(x,y)\,{\rm d}B(x,y)$$
        for every function $\varphi\in\Phi_{\mathrm{icx}}$, where
        $$\Phi_{\mathrm{icx}}:=\bigl\{\varphi:[0,1]^{2}\to\mathbb{R} : \varphi
\text{ is increasing, convex and finite-valued}\bigr\}.$$
        Here, \emph{increasing} means non-decreasing in each argument, and \emph{convex} is understood in the usual sense for functions
        on $\mathbb{R}^{2}$.
\item $A\leq_{\mathrm{sl}} B$ (\emph{stop-loss order on sums}) if
\[
\mathbb{E}\bigl[(S_A-t)^+\bigr]\le \mathbb{E}\bigl[(S_B-t)^+\bigr]
\quad\text{for every } t\in\mathbb{R},
\]
where $S_A:=X+Y$ under $(X,Y)\sim A$,  $S_B:=X'+Y'$ under $(X',Y')\sim B$ and $z^+=\max(z,0).$
\item $A\le_{\rm cx}B$ (\emph{convex order}) if
$$\int_{[0,1]^2}\varphi(x,y)\,{\rm d}A(x,y)\le\int_{[0,1]^2}\varphi(x,y)\,{\rm d}B(x,y)$$ for every convex $\varphi:[0,1]^2\to\mathbb R$ (no monotonicity required).
\end{enumerate}
\end{definition}

\begin{remark}\label{rem:order_implications}
For bivariate copulas, $A\le_{\mathrm{sm}} B$ is equivalent to
$B\le_{\mathrm{lo}} A$~\cite[Theorem~3.8.2]{MuellerStoyan2002}. Moreover, $A\le_{\mathrm{sm}} B$ implies the
stop-loss order $A\le_{\mathrm{sl}} B$, since
$(x,y)\mapsto (x+y-t)^+$ is supermodular for every $t\in\mathbb{R}$. Note, however, that $\le_{\mathrm{sm}}$ does not in general imply the bivariate increasing convex order $\le_{\mathrm{icx}}$: a componentwise increasing, jointly convex function on $[0,1]^2$ need not be supermodular ---consider, e.g., $f(x,y)=5x+5y+(x^2-xy+y^2)/2$.
Note also that $\le_{\rm sm}$ and $\le_{\rm icx}$ are incomparable in general: neither implies the other~\cite{MuellerStoyan2002}. Moreover, $\le_{\rm lo}$ and $\le_{\rm icx}$ are incomparable in general as well: a pointwise domination of copulas does not imply domination in the increasing convex order, nor vice versa.
\end{remark}

\begin{remark}\label{rem:sl}
By definition, $A\leq_{\mathrm{sl}} B$ is exactly the increasing convex order applied to the one-dimensional random variables $S_A$ and $S_B$ (see~\cite[Section~4.A.1]{ShakedShanthikumar2007}). Because every bivariate copula has uniform marginals on $[0,1]$, one has $\mathbb{E}[S_A]=\mathbb{E}[S_B]=1$ independently of $A,B$; in this equal-means case, the stop-loss and the (univariate) convex orders applied to $S_A,S_B$ all coincide (see~\cite[Theorem 3.A.1]{ShakedShanthikumar2007}; and also \cite[\S\,3.2]{ArnoldSarabia2018} for the Lorenz-order formulation), which justifies referring to $\leq_{\mathrm{sl}}$ also as a stop-loss-type order in the present setting. Equivalently, $A\leq_{\mathrm{sl}} B$ if, and only if,
\[
\int_t^{\infty}\overline{F}_{S_A}(s)\,{\rm d}s\;\le\; \int_t^{\infty}\overline{F}_{S_B}(s)\,{\rm d}s\quad\text{for all } t\ge 0.
\]
(see~\cite{DenuitDhaeneGoovaertsKaas2005} for details).
\end{remark}

\begin{remark}
For given copulas $A$ and $B$, every affine function $\varphi(x,y)=ax+by+c$ satisfies
\begin{align*}
\int_{[0,1]^2}\varphi(x,y)\,{\rm d}(B-A)(x,y)
= a\int_0^1 x\,{\rm d}(B_X-A_X)(x)
 + b\int_0^1 y\,{\rm d}(B_Y-A_Y)(y) = 0,
\end{align*}
where $A_X, B_X$ (respectively, $A_Y, B_Y$) denote the marginal distributions of 
$A$ and $B$ on the first (respectively, second) coordinate; since $A,B\in\mathcal{C}$ 
both have standard uniform marginals, $A_X=B_X=\mathcal{U}(0,1)$ and 
$A_Y=B_Y=\mathcal{U}(0,1)$, so both integrals vanish. Hence $A\le_{\rm cx}B$ depends only on the equivalence class of $\varphi$ modulo 
affine functions; in particular, it suffices to test against any countable, uniformly dense subfamily of the cone of continuous convex functions on $[0,1]^2$.
\end{remark}

\subsection{Baire categories}\label{sec:bairecat}
 
We recall the relevant notions from Baire category theory; the standard reference is~\cite{Oxtoby1980}. Given a topological space $(X,\tau)$, a subset $A\subseteq X$ is called \emph{nowhere dense} if its closure $\overline{A}$ has empty interior, i.e., $\overset{\circ}{\overline{A}}=\emptyset$. A set expressible as a countable union of nowhere dense sets is called \emph{meager} (or \emph{of first category}). The complement of a meager set is called \emph{co-meager} (or \emph{residual}), and its elements are termed \emph{typical}.
 
The cornerstone of this theory is the following result.
 
\begin{theorem}[{\bf Baire Category}]
\label{thm:baire}
Every complete metric space is a Baire space, i.e., every non-empty open set is of second category. Equivalently, in a complete metric space, no open set is meager. In particular, every co-meager subset of a complete metric space is of second category.
\end{theorem}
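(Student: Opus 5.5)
We would prove Theorem~\ref{thm:baire} by the classical nested-balls argument (see~\cite{Oxtoby1980}). The target is the following equivalent formulation: if $(X,\rho)$ is a complete metric space and $(G_n)_{n\ge1}$ is a sequence of dense open subsets of $X$, then $\bigcap_n G_n$ is dense in $X$. All the stated forms follow from this one.

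\textbf{Reduction.} Suppose a non-empty open set $U$ were meager, say $U\subseteq\bigcup_n N_n$ with each $N_n$ nowhere dense. Put $G_n:=X\setminus\overline{N_n}$.
\begin{itemize}
\item Each $G_n$ is open.
\item Each $G_n$ is dense, precisely because $\overline{N_n}$ has empty interior.
\item Since $U\subseteq\bigcup_n\overline{N_n}$, we get $U\cap\bigcap_n G_n=\emptyset$.
\end{itemize}
This contradicts the density of $\bigcap_n G_n$. Hence no non-empty open set is meager, i.e.\ every non-empty open set is of second category.

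For the last assertion, let $R$ be co-meager. If $R$ were meager, then $X=R\cup(X\setminus R)$ would be meager. This is impossible when $X\neq\emptyset$, since $X$ is itself a non-empty open set.

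\textbf{Core step.} Fix a non-empty open set $U$; we must show $U\cap\bigcap_n G_n\neq\emptyset$. We build closed balls $\overline{B}(x_n,r_n)$ as follows.
\begin{itemize}
\item Since $G_1$ is dense and open, $U\cap G_1$ is non-empty and open. Choose $x_1$ and $0<r_1<1$ with $\overline{B}(x_1,r_1)\subseteq U\cap G_1$.
\item Inductively, $B(x_{n},r_{n})\cap G_{n+1}$ is non-empty and open, by density of $G_{n+1}$. Choose $x_{n+1}$ and $0<r_{n+1}<\min\{r_n/2,\,2^{-n}\}$ with
\[
\overline{B}(x_{n+1},r_{n+1})\subseteq B(x_n,r_n)\cap G_{n+1}.
\]
\end{itemize}

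\textbf{Conclusion.} For $m\ge n$ we have $x_m\in \overline{B}(x_n,r_n)$, so $\rho(x_m,x_n)\le r_n\to0$. Thus $(x_n)$ is Cauchy, and by completeness it converges to some $x\in X$. Each closed ball $\overline{B}(x_n,r_n)$ contains the tail of the sequence and is closed, so it contains $x$. Therefore
\[
x\in U\cap\bigcap_n G_n .
\]

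The only delicate point, which we regard as the main obstacle, is this limit step. It needs closed balls nested strictly inside the previous open ones, with radii shrinking to zero, so that the limit is guaranteed to stay inside every $G_n$. Everything else is bookkeeping.
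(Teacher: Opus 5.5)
Your proof is correct and complete. The paper gives no proof of its own beyond citing \cite{Oxtoby1980}, whose argument is the nested-closed-balls construction you reproduce, and your derivation of the final claim via $X=R\cup(X\setminus R)$ is the same observation the paper makes right after the statement. Your two caveats are apt: the co-meager claim needs $X\neq\emptyset$, and ``no open set is meager'' should read ``no non-empty open set'', since $\emptyset$ is open and meager.
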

 
An important observation is that, in Baire spaces, a co-meager set $A\subseteq X$ is necessarily of second category: if $A$ were meager, then $X = A\cup A^c$ would itself be meager (as the union of two meager sets, since $A^c$ is meager by definition of co-meager), contradicting the Baire property.
 
Since the product of complete metric spaces is complete, and $(\mathcal{C},d)$ is complete~(see \cite{DuranteSempi2016}), the product space $(\mathcal{C}\times\mathcal{C}, d\times d)$ is a complete metric space, hence a Baire space by Theorem~\ref{thm:baire}.

Let $\mathcal{F}$ be the class of (right-continuous) distribution functions $F$ with support on $[0,1]$ such that $F$ is non-decreasing, and $F(1) = 1$; and set $\mathcal{F}^2 := \mathcal{F} \times \mathcal{F}$ equipped with the topology induced by the metric $d_L \times d_L$, where $d_L$ denotes the Lévy metric (see \cite{Bill99}). We record here the results of~\cite{DuranteFernandezIgnazzi2022}
that are directly related to the present work. 
 
\begin{theorem}\label{thm:DFI_univariate}
For every $\leq_*\in\{\leq_{\mathrm{st}},\leq_{\mathrm{icx}},\leq_{\mathrm{mrl}}\}$, the set $\mathcal{F}^2_*:=\{(F,G)\in\mathcal{F}^2 : F\leq_* G \text{ or } G\leq_* F\}$ is nowhere dense in $(\mathcal{F}^2,d_L\times d_L)$, and its complement is co-meager.
\end{theorem}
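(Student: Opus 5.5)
Since $\le_{\mathrm{st}}$ and $\le_{\mathrm{mrl}}$ both imply $\le_{\mathrm{icx}}$, the sets $\mathcal{F}^2_{\mathrm{st}}$ and $\mathcal{F}^2_{\mathrm{mrl}}$ are contained in $\mathcal{F}^2_{\mathrm{icx}}$. Moreover, a subset of a nowhere dense set is nowhere dense. So the plan is to prove that $\mathcal{F}^2_{\mathrm{icx}}$ is closed and has empty interior in $(\mathcal{F}^2,d_L\times d_L)$. The co-meagerness of the complements then follows at once for all three orders. This route avoids proving closedness of the $\le_{\mathrm{mrl}}$-comparable set, which is delicate because $m_F$ involves division by $\overline F$.

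\emph{Closedness.} Write $\pi_F(t):=\int_t^1\overline F(x)\,{\rm d}x=\int(x-t)^+\,{\rm d}F(x)$. On $[0,1]$, convergence in the Lévy metric is weak convergence. For each fixed $t$ the function $x\mapsto(x-t)^+$ is bounded and continuous on $[0,1]$. Hence $F_n\to F$ implies $\pi_{F_n}(t)\to\pi_F(t)$ for every $t$. Therefore, if $(F_n,G_n)\to(F,G)$ with $F_n\le_{\mathrm{icx}}G_n$, passing to the limit in $\pi_{F_n}(t)\le\pi_{G_n}(t)$ gives $F\le_{\mathrm{icx}}G$. So $\{F\le_{\mathrm{icx}}G\}$ is closed. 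By symmetry so is $\{G\le_{\mathrm{icx}}F\}$, and $\mathcal{F}^2_{\mathrm{icx}}$ is their union.

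\emph{Empty interior.} Fix $(F,G)$ and $\varepsilon>0$. We construct $(F',G')$ within $\varepsilon$ such that $\pi_{F'}(t_1)>\pi_{G'}(t_1)$ for some $t_1$ and $\pi_{F'}(t_2)<\pi_{G'}(t_2)$ for some $t_2$. The steps are as follows.

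\begin{enumerate}
\item Compress both distributions into $[0,1-\eta]$ via $x\mapsto(1-\eta)x$. This moves every point by at most $\eta$, so the Lévy distance is at most $\eta$. Call the results $F_c,G_c$.
\item Arrange strictly ordered means. If $\int x\,{\rm d}F_c=\int x\,{\rm d}G_c$, compress $G_c$ once more by a factor $1-\eta'$. This lowers its mean strictly unless $G_c=\delta_0$; in that case compress $F_c$ instead. Relabelling if necessary, we may assume $\mu_F:=\int x\,{\rm d}F_c<\mu_G:=\int x\,{\rm d}G_c$.
\item Set $F':=(1-\delta)F_c+\delta\,\delta_1$, where $\delta_1$ is the point mass at $1$ and $\delta>0$ is so small that $d_L(F',F_c)<\varepsilon/2$ and $\mathbb E_{F'}[X]<\mu_G$. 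Set $G':=G_c$.
\item Compare tails. For $t\in(1-\eta,1)$ we have $\pi_{G'}(t)=0<\delta(1-t)\le\pi_{F'}(t)$, so $F'\not\le_{\mathrm{icx}}G'$. At $t=0$, $\pi_{F'}(0)=\mathbb E_{F'}[X]<\mu_G=\pi_{G'}(0)$, so $G'\not\le_{\mathrm{icx}}F'$.
\end{enumerate}

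Taking $\eta,\eta',\delta$ small enough gives $(F',G')$ within $\varepsilon$ of $(F,G)$ and outside $\mathcal{F}^2_{\mathrm{icx}}$. Hence the closed set $\mathcal{F}^2_{\mathrm{icx}}$ has empty interior, and is nowhere dense.

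The main obstacle is the bookkeeping in steps 2 and 3. The mass added at $1$ must be small enough to preserve the strict mean gap, while the compression must be large enough to create a tail window where $G'$ vanishes. Both are achieved because the mean gap from step 2 is fixed before $\delta$ is chosen. The degenerate cases $F=G$ or point masses at $0$ need only the indicated swap of roles.
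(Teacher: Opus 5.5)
The paper does not prove this statement. It is quoted as background from Durante, Fern\'andez-S\'anchez and Ignazzi (2022), so your argument has to stand on its own. Its architecture is sound, and it is the univariate analogue of what the paper does for copulas in Corollary~\ref{cor:Q_closed} and Lemma~\ref{lem:witnesses}. Closedness comes from weak convergence of $\int(x-t)^+\,{\rm d}F$. The incomparable witness separates the two ends: the means at $t=0$, and a small atom at $1$ against a compressed support for $t$ near $1$. These play the role of the two corner test functions $\varphi_1,\varphi_2$. Deducing the $\le_{\rm st}$ and $\le_{\rm mrl}$ cases by inclusion is a real economy, since it avoids proving closedness of the mrl-comparable set. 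The principle you use is also correct: $A\subseteq B$ gives $\operatorname{int}\overline{A}\subseteq\operatorname{int}\overline{B}$, so subsets of nowhere dense sets are nowhere dense. The contrary claim in the last paragraph of Remark~\ref{rem:sm-chain} is mistaken.

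Two points need attention.

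\emph{The degenerate case.} The fallback in your Step~2 is vacuous. If $F_c$ and $G_c$ have equal means and $G_c=\delta_0$, then $F_c$ also has mean $0$, hence $F_c=\delta_0$, and compressing it changes nothing. So for $(F,G)=(\delta_0,\delta_0)$ no compression or relabelling yields $\mu_F<\mu_G$, and Step~3 would need $\mathbb{E}_{F'}[X]=\delta<0$. The repair is immediate: the space has no isolated points. Any ball around $(\delta_0,\delta_0)$ contains a smaller ball centred at, e.g., $(\delta_0,\delta_s)$ with $s>0$ small, and your construction works there.

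\emph{The mrl case.} It is only as good as the implication $\le_{\rm mrl}\Rightarrow\le_{\rm icx}$ that you take from Section~\ref{sec:stoc}. That implication holds for the standard definition, where $m_F(t)\le m_G(t)$ is required for all real $t$; for $t<0$ this forces $\mathbb{E}_F[X]\le\mathbb{E}_G[X]$. It also holds when $F(0)=G(0)=0$. But it fails if Definition~\ref{def:univariate_orders} is read literally, with test points only in $[0,1]$ and atoms at $0$ allowed. Take $F=\delta_a$ and $G=(1-\beta)\delta_0+\beta\delta_1$ with $0<\beta<a<1$.
\begin{itemize}
\item On $[0,a)$ one gets $m_F(x)=a-x\le 1-x=m_G(x)$, so $F\le_{\rm mrl}G$.
\item However, $\int_0^1\overline{F}=a>\beta=\int_0^1\overline{G}$, so $F\not\le_{\rm icx}G$.
\item Also $\int_t^1\overline{G}=\beta(1-t)>0=\int_t^1\overline{F}$ for $t\in(a,1)$, so $G\not\le_{\rm icx}F$.
\end{itemize}
This pair is therefore $\le_{\rm icx}$-incomparable. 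You should state explicitly that you work with the standard mrl definition. Under the literal one, $\mathcal{F}^2_{\rm mrl}\not\subseteq\mathcal{F}^2_{\rm icx}$, and the mrl case needs its own argument.
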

 
\begin{theorem}\label{thm:DFI_lo}
The set $\mathcal{P}:=\{(A,B)\in\mathcal{C}\times\mathcal{C}:
A\le_{\mathrm{lo}}B\text{ or }B\le_{\mathrm{lo}}A\}$
is nowhere dense in $(\mathcal{C}\times\mathcal{C},d\times d)$,
and its complement is co-meager. Consequently, two elements of the same Fréchet class
$\mathcal{F}(F_{1},F_{2})$ of continuous distribution functions are typically not comparable in the lower orthant order.
\end{theorem}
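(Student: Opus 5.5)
The plan is to split nowhere-density into two parts: first show that $\mathcal{P}$ is closed, then show that every open ball of $(\mathcal{C}\times\mathcal{C},d\times d)$ contains a pair of copulas that is \emph{strictly} incomparable in a way that survives small perturbations.

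Closedness is immediate. If $(A_n,B_n)\to(A,B)$ in $d\times d$ and, after passing to a subsequence, $B_n\le A_n$ pointwise for all $n$, then letting $n\to\infty$ pointwise gives $B\le A$, i.e. $A\le_{\mathrm{lo}}B$. The same holds for the other direction, so $\overline{\mathcal{P}}=\mathcal{P}$. It therefore remains to show that $\mathcal{P}$ has empty interior.

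Fix $(A,B)$ and $\varepsilon>0$. The construction proceeds in three steps.
\begin{enumerate}
\item First replace $A$ and $B$ by $A_\eta=(1-\eta)A+\eta\Pi$ and $B_\eta=(1-\eta)B+\eta\Pi$, where $\Pi(x,y)=xy$. These are within $\eta$ of $A$ and $B$ respectively. Their measures give every subrectangle positive mass with non-degenerate marginals.
\item Choose $h$ small and use a patchwork construction on the two corner squares $Q_0=[0,h]^2$ and $Q_1=[1-h,1]^2$. Inside a square $Q$, the restricted measure of a copula $C$ can be replaced by any measure on $Q$ with the same two marginal projections without destroying the copula property. Among such measures, the comonotone rearrangement maximizes $C(t,t)$ for $t$ inside $Q$ and the countermonotone one minimizes it. Accordingly, let $A_1$ be $A_\eta$ with its mass in $Q_0$ made comonotone and its mass in $Q_1$ made countermonotone. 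Let $B_1$ be $B_\eta$ with the opposite choices.
\item Because the patch only moves mass inside a square of side $h$, the Lipschitz property gives $d(A_1,A_\eta)\le 2h$, and likewise for $B$. Hence $(A_1,B_1)$ lies in the $\varepsilon$-ball for suitable $\eta,h$.
\end{enumerate}

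The main obstacle is to guarantee \emph{strict} reversed inequalities, namely $A_1(p)>B_1(p)$ at some $p\in Q_0$ and $A_1(q)<B_1(q)$ at some $q\in Q_1$. The original copulas $A$ and $B$ may have arbitrary relative masses in these squares, so a comonotone rearrangement of a small mass could still lie below a countermonotone rearrangement of a larger mass. To handle this, I would first try to equalize the masses by choosing the patches via a common scaled structure: in $Q_0$, set the patched mass of both copulas to behave like the respective marginal masses, which are $\approx h-O(h^2)$ and nearly equal thanks to the positive density from the $\Pi$-mixture. If that fails, I would instead perform an ordinal-sum-type surgery: set $A_1$ equal to $M$ on a tiny diagonal block and $B_1$ equal to $W$ on it, and compare at the block midpoint, where the values differ by a fixed fraction of the block side.

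Once strict inequalities hold at $p$ and $q$, choose $\delta$ smaller than half of each gap. Then every pair in $B_d(A_1,\delta)\times B_d(B_1,\delta)$ still satisfies both strict inequalities, so this ball misses $\mathcal{P}$. Hence $\mathcal{P}$ has empty interior, and being closed it is nowhere dense. Its complement is open and dense, and in particular co-meager.

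For the Fréchet-class statement, by Sklar's theorem with continuous marginals the map $C\mapsto C(F_1,F_2)$ is a bijection $\mathcal{C}\to\mathcal{F}(F_1,F_2)$. It is a homeomorphism for the uniform metrics and preserves pointwise order. So the image of $\mathcal{P}$ is exactly the set of $\le_{\mathrm{lo}}$-comparable pairs in the Fréchet class, which is therefore nowhere dense there as well.
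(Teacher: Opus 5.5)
Your outer framework is sound: closedness via pointwise limits, the $\delta$-ball argument from strict inequalities, and the transfer to $\mathcal{F}(F_1,F_2)$ through the order-preserving isometry $C\mapsto C(F_1,F_2)$. The patch bound also holds, in fact as $d(A_1,A_\eta)\le h$; it holds because a patch keeps the marginals of a restricted measure of mass at most $h$, not because of the Lipschitz property.

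The genuine gap is the decisive step. You never prove $A_1(p)>B_1(p)$ and $A_1(q)<B_1(q)$, and that is the whole content of the theorem. You flag it as the main obstacle and offer two tentative routes, neither of which is carried out.

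Your first route rests on a false premise. A patch cannot change $\mu_{A_\eta}(Q_0)=(1-\eta)A(h,h)+\eta h^2$. This mass is $\eta h^2$ for $A=W$ and about $h$ for $A=M$. So these masses are neither $h-O(h^2)$ nor nearly equal, and no patch can equalize them.

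Your fallback is essentially the construction behind the cited proof (Durante, Fern\'andez-S\'anchez and Ignazzi, Theorem~6), which the paper reuses in Lemma~\ref{lem:witnesses}. It takes $S_1=(W,[0,\rho])\oplus(A,[\rho,1-\rho])\oplus(M,[1-\rho,1])$ and $S_2=(M,[0,\rho])\oplus(B,[\rho,1-\rho])\oplus(W,[1-\rho,1])$. Then $S_1(\rho/2,\rho/2)=0<\rho/2=S_2(\rho/2,\rho/2)$ and $S_1(1-\rho/2,1-\rho/2)=1-\rho/2>1-\rho=S_2(1-\rho/2,1-\rho/2)$. As you state it, though, three things are missing:
\begin{itemize}
\item a second block with the roles of $M$ and $W$ swapped;
\item the ordinal-sum structure that forces a common base value at the block corner, since an $M$- or $W$-block cannot simply be patched into $A_\eta$;
\item an estimate like $d(S_1,A)<3\rho$ for the rescaled middle block, which your Step~3 does not provide.
\end{itemize}

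Your own patchwork in fact closes without any mass equalization, if you evaluate very close to the corners.

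In $Q_0$, for $t\le h$, the comonotone patch gives $A_1(t,t)=\min\{A_\eta(t,h),A_\eta(h,t)\}\ge\eta th>0$. The countermonotone patch gives $B_1(t,t)=\max\{B_\eta(t,h)+B_\eta(h,t)-B_\eta(h,h),0\}\le\max\{2t-\eta h^2,0\}$, which is $0$ once $t<\eta h^2/2$.

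In $Q_1$ there is a second point your sketch leaves open. The values $A_1(t,t)$ and $B_1(t,t)$ also contain the mass of $[0,t]^2\setminus Q_1$, and that mass differs between $A_\eta$ and $B_\eta$. Use instead $C(t,t)=2t-1+\mu_C([t,1]^2)$, noting $[t,1]^2\subseteq Q_1$ for $t\ge 1-h$. The countermonotone patch gives $\mu_{A_1}([t,1]^2)=\max\{\mu_{A_\eta}([t,1]\times[1-h,1])+\mu_{A_\eta}([1-h,1]\times[t,1])-\mu_{A_\eta}(Q_1),0\}$. This vanishes for $1-t<\eta h^2/2$, because each of the first two terms is at most $1-t$ and $\mu_{A_\eta}(Q_1)\ge\eta h^2$. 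The comonotone patch gives $\mu_{B_1}([t,1]^2)\ge\eta h(1-t)>0$. Hence $A_1(t,t)=2t-1<B_1(t,t)$.

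With these two computations your $\delta$-ball argument finishes the proof. The only remaining difference from the cited route is that you perturb $A$ and $B$ locally by patches, rather than rescaling them into the middle block of an ordinal sum.
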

 
Theorem~\ref{thm:DFI_univariate} covers $\le_{\mathrm{icx}}$ for univariate distributions on $[0,1]$. Theorem~\ref{thm:DFI_lo} covers the lower orthant order on copulas. The present paper fills the remaining gap by treating
the increasing convex order $\leq_{\mathrm{icx}}$, the bivariate convex order $\leq_{\mathrm{\rm cx}}$, and the stop-loss order $\leq_{\mathrm{\rm sl}}$ on the sum of components directly on bivariate copulas, a setting left open in \cite{DuranteFernandezIgnazzi2022}.

\section{Baire category results for the increasing convex order}
\label{sec:icx_copulas}

In this section we investigate the topological size of the $\le_{\mathrm{icx}}$-comparable pairs of copulas from the perspective of Baire category theory. We first establish the closedness of $\mathcal{Q}$ in
$(\mathcal{C}\times\mathcal{C},d\times d)$
(Lemma~\ref{lem:icx_closed} and Corollary~\ref{cor:Q_closed}), which is a prerequisite for the nowhere-denseness argument.
We then prove our main result (Theorem~\ref{thm:icx_nowhere_dense}).

\subsection{Closedness of the increasing convex order in \texorpdfstring{$\mathcal{C}$}{}}

Before proving the closedness result, we recall the following standard theorem, whose proof can be found, e.g., in \cite{Dudley89}.

\begin{theorem}[{\bf Portmanteau}]\label{th:port}
    For probability measures $\mu_n$ ($n\ge 1$) and $\mu$ on a metric space $(E,\mathcal{B}(E))$ the following statements are equivalent:
    \begin{itemize}
        \item [i)] $\mu_n$ converges weakly to $\mu$ (denoted by $\mu_n \xrightarrow{w} \mu$).
        \item [ii)] For all measurable open set $S$, $\displaystyle\liminf_{n\to\infty}\mu_n(S)\geq\mu(S)$.
        \item [iii)] For all measurable closed set $F$, $\displaystyle\limsup_{n\to\infty}\mu_n(F)\leq\mu(F)$.
        \item [iv)] $\displaystyle\int g\, {\rm d}\mu_n\longrightarrow\int g\,{\rm d}\mu$, for every bounded measurable function $g:E\longrightarrow\mathbb{R}$ with $\mu(\mathrm{Disc}(g))=0$, where $\mathrm{Disc}(g)$ 
denotes the set of discontinuity points of $g$.
    \end{itemize}
\end{theorem}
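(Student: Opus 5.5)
We will prove the cycle i)$\Rightarrow$iii)$\Leftrightarrow$ii)$\Rightarrow$iv)$\Rightarrow$i). The last implication is immediate: a bounded continuous $g$ has $\mathrm{Disc}(g)=\emptyset$, so iv) contains the defining property of weak convergence. The equivalence ii)$\Leftrightarrow$iii) follows by passing to complements. If $S$ is open, then $F=E\setminus S$ is closed and $\mu_n(S)=1-\mu_n(F)$. Hence $\liminf_n\mu_n(S)=1-\limsup_n\mu_n(F)$, and the two inequalities translate into each other.

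For i)$\Rightarrow$iii) we fix a closed set $F$ and use the bounded Lipschitz functions
$$f_k(x):=\max\{0,\,1-k\,\mathrm{dist}(x,F)\},\qquad k\ge1.$$
These satisfy $\mathbf 1_F\le f_k$, and $f_k\downarrow\mathbf 1_F$ pointwise because $F$ is closed. Weak convergence then gives, for each fixed $k$,
$$\limsup_n\mu_n(F)\le\lim_n\int f_k\,{\rm d}\mu_n=\int f_k\,{\rm d}\mu.$$
Letting $k\to\infty$ and applying monotone (or dominated) convergence yields $\limsup_n\mu_n(F)\le\mu(F)$.

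The main step is ii)+iii)$\Rightarrow$iv). First we show that ii) and iii) together give $\mu_n(D)\to\mu(D)$ for every Borel $D$ with $\mu(\partial D)=0$. This follows by sandwiching $D$ between its interior and its closure: $\mu(D^\circ)=\mu(D)=\mu(\overline D)$, and ii) applied to $D^\circ$ together with iii) applied to $\overline D$ bounds $\liminf_n\mu_n(D)$ and $\limsup_n\mu_n(D)$ by $\mu(D)$. Next let $g$ be bounded and measurable with $\mu(\mathrm{Disc}(g))=0$. After an affine change we may assume $0<g<1$. For $t\in(0,1)$ put $D_t:=\{g>t\}$. The key observation is
$$\partial D_t\subseteq \mathrm{Disc}(g)\cup\{g=t\}.$$
Indeed, at a continuity point $x$ with $g(x)\neq t$, the strict inequality $g(x)>t$ or $g(x)<t$ persists on a neighbourhood of $x$, so $x$ is not a boundary point. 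The sets $\{g=t\}$ are disjoint for distinct $t$, so $\mu(\{g=t\})>0$ for at most countably many $t$. Hence $\mu(\partial D_t)=0$ for Lebesgue-almost every $t$, and therefore $\mu_n(D_t)\to\mu(D_t)$ for a.e.\ $t$. Finally, the layer-cake formula gives
$$\int g\,{\rm d}\mu_n=\int_0^1\mu_n(D_t)\,{\rm d}t,$$
and the same formula holds for $\mu$. The integrands are bounded by $1$, so dominated convergence in $t$ gives $\int g\,{\rm d}\mu_n\to\int g\,{\rm d}\mu$.

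The only delicate points are the boundary inclusion for $D_t$ and the countability argument for the atoms of the push-forward $\mu\circ g^{-1}$. The remaining steps are routine.
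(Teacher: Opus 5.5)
Your proof is correct and complete. The chain i)$\Rightarrow$iii)$\Leftrightarrow$ii)$\Rightarrow$iv)$\Rightarrow$i) holds. So do its ingredients: the Lipschitz approximants $f_k\downarrow\mathbf{1}_F$, the interior/closure sandwich for $\mu$-continuity sets, and the layer-cake formula combined with $\partial\{g>t\}\subseteq\mathrm{Disc}(g)\cup\{g=t\}$ and the countability of the atoms of $\mu\circ g^{-1}$. Note also that $\mathrm{Disc}(g)$ is an $F_\sigma$ set, so $\mu(\mathrm{Disc}(g))$ is well defined.

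The paper gives no proof of this theorem and simply refers to Dudley; your argument is the standard textbook proof that this citation relies on.
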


With this tool at hand, we can now establish the closedness of the set of $\le_{\rm icx}$-comparable pairs.

\begin{lemma}\label{lem:icx_closed}
The set
\[
  \mathcal{Q}_{1}
  \;:=\;
  \bigl\{(A,B)\in\mathcal{C}\times\mathcal{C}
  :A\le_{\mathrm{icx}} B\bigr\}
\]
is closed in $(\mathcal{C}\times\mathcal{C},d\times d)$.
\end{lemma}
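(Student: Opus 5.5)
Since $(\mathcal{C}\times\mathcal{C},d\times d)$ is a metric space, it suffices to prove sequential closedness. Take a sequence $(A_n,B_n)\in\mathcal{Q}_1$ with $(A_n,B_n)\to(A,B)$ in $d\times d$, i.e.\ $d(A_n,A)\to0$ and $d(B_n,B)\to0$, and show that $A\le_{\mathrm{icx}}B$. Uniform convergence of copulas is equivalent to weak convergence of the associated probability measures on the compact metric space $E=[0,1]^2$ (Section~\ref{sec:copulas}). Hence the induced measures satisfy $\mu_{A_n}\xrightarrow{w}\mu_A$ and $\mu_{B_n}\xrightarrow{w}\mu_B$.

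Fix $\varphi\in\Phi_{\mathrm{icx}}$. The key step is to check that $\varphi$ is an admissible test function in item iv) of Theorem~\ref{th:port}, namely that it is bounded and continuous $\mu_A$- and $\mu_B$-almost everywhere. In the interior $(0,1)^2$, $\varphi$ is a finite convex function and therefore continuous. The only possible discontinuities lie on the boundary of the square, where a convex function restricted to $[0,1]^2$ may jump. Boundedness is immediate from monotonicity: $\varphi(0,0)\le\varphi(x,y)\le\varphi(1,1)$, both values finite.

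For the boundary I see two routes, and I would take the second because it is self-contained.
\begin{itemize}
\item[(a)] Show that $\mu_C(\partial[0,1]^2)=0$ for every copula $C$. This holds because each side of the square projects onto a single point of a uniform marginal, and such a point has zero mass. Then $\mathrm{Disc}(\varphi)\subseteq\partial[0,1]^2$ is a null set for every $\mu_C$.
\item[(b)] Regularise $\varphi$. Set $\varphi_\varepsilon(x,y):=\varphi(\varepsilon+(1-2\varepsilon)x,\;\varepsilon+(1-2\varepsilon)y)$. This function is increasing, convex and continuous on all of $[0,1]^2$, because it only evaluates $\varphi$ in the interior. Moreover $\varphi_\varepsilon\to\varphi$ pointwise on $(0,1)^2$, and $|\varphi_\varepsilon|$ is bounded uniformly in $\varepsilon$. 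Running the argument with $\varphi_\varepsilon$ and then letting $\varepsilon\to0$ by dominated convergence recovers $\varphi$, again using $\mu_C(\partial[0,1]^2)=0$.
\end{itemize}

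With admissibility settled, the conclusion follows directly. For every $n$ we have $\int\varphi\,{\rm d}A_n\le\int\varphi\,{\rm d}B_n$. By Theorem~\ref{th:port} iv), both sides converge, to $\int\varphi\,{\rm d}A$ and $\int\varphi\,{\rm d}B$ respectively. Passing to the limit preserves the inequality, so $\int\varphi\,{\rm d}A\le\int\varphi\,{\rm d}B$. Since $\varphi\in\Phi_{\mathrm{icx}}$ was arbitrary, this gives $A\le_{\mathrm{icx}}B$, so $(A,B)\in\mathcal{Q}_1$.

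I expect the only real obstacle to be the boundary behaviour of $\varphi$, that is, justifying that its possible discontinuities on $\partial[0,1]^2$ are negligible. The uniform-marginal argument in (a) settles this.
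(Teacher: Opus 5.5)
Your proposal is correct and follows the paper's proof: sequential closedness, continuity of $\varphi$ on $(0,1)^2$, $\mu_C(\partial[0,1]^2)=0$ from the uniform marginals, and Portmanteau (iv) to pass to the limit in $\int\varphi\,{\rm d}A_n\le\int\varphi\,{\rm d}B_n$. Your route (a) is exactly the paper's argument; your monotonicity bound $\varphi(0,0)\le\varphi\le\varphi(1,1)$ is a cleaner boundedness argument than the paper's edge-restriction one, and the regularisation in (b) is also valid, needing only continuous test functions plus dominated convergence.
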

 
\begin{proof}
Let $\{(A_{n},B_{n})\}_{n\ge 1}\subseteq\mathcal{Q}_{1}$ be a sequence
with $d(A_{n},A)\to 0$ and $d(B_{n},B)\to 0$, for some
$A,B\in\mathcal{C}$.
We show $A\le_{\mathrm{icx}} B$.
 
Fix any $\varphi\in\Phi_{\rm icx}$.
Since $(A_{n},B_{n})\in\mathcal{Q}_{1}$,
\begin{equation*}\label{eq:ineq_n}
  \int_{[0,1]^{2}}\varphi(x,y)\,{\rm d}A_{n}(x,y)
  \;\le\; \int_{[0,1]^{2}}\varphi(x,y)\,{\rm d}B_{n}(x,y)
  \quad\text{for all }n\ge 1.
\end{equation*}
Every $\varphi\in\Phi_{\mathrm{icx}}$ is continuous on the open square $(0,1)^2$ ---a convex function on $\mathbb{R}^n$ is continuous on the interior of any convex set on which it is finite: see e.g.~\cite[Theorem~10.1]{Rockafellar1970}. On each of the four edges of $\partial[0,1]^2$ ---the boundary of the unit square---, $\varphi$ restricts to a convex function of one variable on a compact interval, hence is bounded; combined with continuity in the interior this yields a global bound $|\varphi|\le K_\varphi$ for some $K_\varphi<\infty$. The set $\mathrm{Disc}(\varphi)$ is 
contained in $\partial[0,1]^2$. For every copula $C \in \mathcal{C}$, the marginal 
of $\mu_C$ on each coordinate is Lebesgue measure on $[0,1]$, so 
$\mu_C(\{0\} \times [0,1]) = \mathbb{P}(X=0) = 0$, $\mu_C(\{1\} \times [0,1]) = 0$, 
and analogously for the second coordinate. Hence $\mu_C(\partial[0,1]^2) = 0$. By Theorem~\ref{th:port}(iv), weak convergence $A_n \xrightarrow{w} A$ together with $\mu_A(\mathrm{Disc}(\varphi))=0$ implies
$$\int_{[0,1]^2}\varphi(x,y)\,{\rm d}A_n(x,y)\longrightarrow \int_{[0,1]^2}\varphi(x,y)\,{\rm d}A(x,y),$$ and analogously for $B_n \xrightarrow{w} B$. Passing to the limit in $$\int_{[0,1]^2}\varphi(x,y)\,{\rm d}A_n(x,y)\le\int_{[0,1]^2}\varphi(x,y)\,{\rm d}B_n(x,y)$$
yields $$\int_{[0,1]^2}\varphi(x,y)\,{\rm d}A(x,y)\le\int_{[0,1]^2}\varphi(x,y)\,{\rm d}B(x,y),$$
which completes the proof.
\end{proof}

\begin{remark}
We want to stress that the compactness of $[0,1]^{2}$ in Lemma \ref{lem:icx_closed} is essential: it guarantees that every $\varphi\in\Phi_{\rm icx}$ is bounded, which allows the application of Theorem~\ref{th:port}(iv) without moment conditions. By contrast, on $\mathbb{R}^{2}$ increasing convex functions need not be bounded, and $\le_{\mathrm{icx}}$ fails to be closed under weak
convergence~\cite{DuranteFernandezIgnazzi2022,MuellerStoyan2002}.
\end{remark}
 
\begin{corollary}\label{cor:Q_closed}
The set
\begin{equation}\label{eq:Q}
\mathcal{Q}:=
  \bigl\{(A,B)\in\mathcal{C}\times\mathcal{C}
  :(A\le_{\mathrm{icx}} B)\vee(B\le_{\mathrm{icx}} A)\bigr\}
\end{equation}
is closed in $(\mathcal{C}\times\mathcal{C},d\times d)$.
\end{corollary}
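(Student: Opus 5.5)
The plan is to write $\mathcal{Q}$ as the union of two closed sets and use the fact that a finite union of closed sets is closed. By Lemma~\ref{lem:icx_closed}, the set $\mathcal{Q}_1=\{(A,B): A\le_{\mathrm{icx}}B\}$ is closed in $(\mathcal{C}\times\mathcal{C},d\times d)$. Define
\[
\mathcal{Q}_2:=\bigl\{(A,B)\in\mathcal{C}\times\mathcal{C}: B\le_{\mathrm{icx}}A\bigr\},
\]
so that $\mathcal{Q}=\mathcal{Q}_1\cup\mathcal{Q}_2$ directly from \eqref{eq:Q}.

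To see that $\mathcal{Q}_2$ is closed, I will use the swap map $\sigma\colon\mathcal{C}\times\mathcal{C}\to\mathcal{C}\times\mathcal{C}$, $\sigma(A,B)=(B,A)$. This map is an isometry of $d\times d$, since the maximum product metric is symmetric in its two coordinates. Hence $\sigma$ is a homeomorphism and equal to its own inverse. Since $\mathcal{Q}_2=\sigma^{-1}(\mathcal{Q}_1)=\sigma(\mathcal{Q}_1)$, it is the preimage of a closed set under a continuous map, and so it is closed.

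Alternatively, one can rerun the sequential argument of Lemma~\ref{lem:icx_closed} with the roles of $A_n$ and $B_n$ interchanged. Every step there (boundedness of $\varphi$, $\mu_C(\partial[0,1]^2)=0$, and Theorem~\ref{th:port}(iv)) is symmetric in the two copulas.

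There is no real obstacle here. The only point to check is that the swap is continuous for $d\times d$, and this is immediate from the definition of the max metric. Everything else is the elementary fact that a union of two closed sets is closed.
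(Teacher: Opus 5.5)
Your proposal is correct and follows the paper's proof: you write $\mathcal{Q}=\mathcal{Q}_1\cup\mathcal{Q}_2$, take $\mathcal{Q}_1$ closed from Lemma~\ref{lem:icx_closed}, and get $\mathcal{Q}_2=\sigma(\mathcal{Q}_1)$ closed because the coordinate swap $\sigma$ is an isometry of $d\times d$. Writing it instead as the preimage $\sigma^{-1}(\mathcal{Q}_1)$ is the same observation, since $\sigma$ is an involution.
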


\begin{proof}
Write $\mathcal{Q}=\mathcal{Q}_1\cup\mathcal{Q}_2$ with
$\mathcal{Q}_1=\{(A,B)\colon A\le_{\mathrm{icx}} B\}$ and
$\mathcal{Q}_2=\{(A,B)\colon B\le_{\mathrm{icx}} A\}$.
$\mathcal{Q}_1$ is closed by Lemma~\ref{lem:icx_closed}. The flip $\sigma(A,B):=(B,A)$ is an isometry of
$(\mathcal{C}\times\mathcal{C},d\times d)$, and $\mathcal{Q}_2 = \sigma(\mathcal{Q}_1)$,
so $\mathcal{Q}_2$ is closed as the image of a closed set under a homeomorphism.
Hence $\mathcal{Q}=\mathcal{Q}_1\cup\mathcal{Q}_2$ is closed.
\end{proof}

\subsection{The set of comparable pairs is nowhere dense}

In order to prove the main result, we first establish a key technical lemma. We employ the three-block ordinal-sum construction of~\cite[proof of Theorem~6]{DuranteFernandezIgnazzi2022}, which provides pairs of copulas incomparable under the lower orthant order $\le_{\mathrm{lo}}$. Whereas the $\le_{\mathrm{lo}}$-incomparability is shown there by two pointwise evaluations on the diagonal, the increasing convex order cannot be checked pointwise, and the new content of the present lemma (Part~(2) below) is the 
construction of explicit test functions $\varphi_1,\varphi_2 \in \Phi_{\mathrm{icx}}$ 
that witness $S_2 \not\le_{\mathrm{icx}} S_1$ and $S_1 \not\le_{\mathrm{icx}} S_2$ 
for the same pair $(S_1, S_2)$.

\begin{lemma}\label{lem:witnesses}
Let $A,B\in\mathcal{C}$ and $\rho\in\bigl(0,\tfrac{1}{4}\bigr)$. Set $\mu:=1-2\rho$ and define
\begin{equation*}\label{eq:S1_def}
  S_{1}(x,y):=
  \begin{cases}
    \rho\,W\!\left(\dfrac{x}{\rho},\dfrac{y}{\rho}\right),
    & (x,y)\in(0,\rho)^{2},\\[8pt]
    \rho+\mu\,A\!\left(\dfrac{x-\rho}{\mu},
    \dfrac{y-\rho}{\mu}\right),
    & (x,y)\in(\rho,1-\rho)^{2},\\[8pt]
    M(x,y), & \text{elsewhere on }[0,1]^{2},
  \end{cases}
\end{equation*}
\begin{equation*}\label{eq:S2_def}
  S_{2}(x,y):=
  \begin{cases}
    \rho+\mu\,B\!\left(\dfrac{x-\rho}{\mu},
    \dfrac{y-\rho}{\mu}\right),
    & (x,y)\in(\rho,1-\rho)^{2},\\[8pt]
    (1-\rho)+\rho\,W\!\left(\dfrac{x+\rho-1}{\rho},
    \dfrac{y+\rho-1}{\rho}\right),
    & (x,y)\in(1-\rho,1)^{2},\\[8pt]
    M(x,y), & \text{elsewhere on }[0,1]^{2}.
  \end{cases}
\end{equation*}
Then:
\begin{enumerate}
  \item[\rm(1)] $S_1, S_2 \in \mathcal{C}$, $d(S_1, A) < 3\rho$ and $d(S_2, B) < 3\rho$.
  \item[\rm(2)] $(S_{1},S_{2})\notin\mathcal{Q}$, i.e., the pair
        $(S_{1},S_{2})$ is not $\le_{\rm icx}$-comparable.
\end{enumerate}
\end{lemma}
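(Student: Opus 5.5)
For Part~(1) I will identify $S_1$ and $S_2$ as ordinal sums in the sense of Section~\ref{sec:copulas}:
\[
S_1=(W,[0,\rho])\oplus(A,[\rho,1-\rho])\oplus(M,[1-\rho,1]),\qquad S_2=(M,[0,\rho])\oplus(B,[\rho,1-\rho])\oplus(W,[1-\rho,1]).
\]
The last summand of $S_1$ is $M$ itself, which is consistent with the "elsewhere" clause, and similarly for the first summand of $S_2$. Membership in $\mathcal{C}$ then follows from the general theory of ordinal sums~\cite{DuranteKlementSamingerSempi2022}.

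For the distance bound I will split $[0,1]^2$ into regions.
\begin{itemize}
\item If $x\le\rho$ (or $y\le\rho$), then $0\le S_1(x,y),A(x,y)\le\min(x,y)\le\rho$, so the difference is at most $\rho$.
\item If $x\ge 1-\rho$, then both $S_1(x,y)$ and $A(x,y)$ lie in $[y-\rho,y]$, because every copula satisfies $y-(1-x)\le C(x,y)\le y$. Hence the difference is again at most $\rho$. The case $y\ge1-\rho$ is symmetric.
\item On the middle block, put $u=(x-\rho)/\mu$ and $v=(y-\rho)/\mu$. I will estimate
\[
|\rho+\mu A(u,v)-A(x,y)|\le \rho(1-2A(u,v))+|A(u,v)-A(x,y)|.
\]
Then I use the 1-Lipschitz property, together with $|u-x|,|v-y|\le 2\rho/\mu\cdot\max(\ldots)$. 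A slightly careful bookkeeping (using $|u-x|=|\rho(2x-1)|/\mu$, and that $\rho<1/4$) should give a bound strictly below $3\rho$.
\end{itemize}
The argument for $S_2$ and $B$ is identical. This bookkeeping is routine, though getting the strict constant $3$ requires a little care; I would choose the cruder splitting $|\rho-2\rho A(u,v)|\le\rho$ plus the Lipschitz term, and tighten it if needed.

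For Part~(2) the idea is that the two extreme blocks are swapped in "shape". Near $(1,1)$, $S_1$ is comonotone while $S_2$ is countermonotone. Near $(0,0)$, $S_1$ is countermonotone while $S_2$ is comonotone.

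To witness $S_1\not\le_{\rm icx}S_2$ I will take $\varphi_1(x,y)=(x+y-(2-\rho))^+$, which lies in $\Phi_{\rm icx}$. Its support meets only the top block. Indeed, on the middle block $x+y\le 2-2\rho$, and off the diagonal blocks there is no mass. On the top block of $S_2$, mass sits on $x+y=2-\rho$, so $\int\varphi_1\,{\rm d}S_2=0$. For $S_1$ the top block has mass on the diagonal $x=y=t\in[1-\rho,1]$, giving
\[
\int\varphi_1\,{\rm d}S_1=\int_{1-\rho/2}^1(2t-2+\rho)\,{\rm d}t=\rho^2/4>0.
\]

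To witness $S_2\not\le_{\rm icx}S_1$ I will take $\varphi_2(x,y)=(x+y-\rho)^+$. I will write
\[
\varphi_2=(x+y-\rho)+(\rho-x-y)^+.
\]
The affine part integrates to the same value under any copula, by the remark on affine functions. The part $(\rho-x-y)^+$ vanishes on the middle and top blocks, since there $x+y\ge2\rho$. It also vanishes on the $W$-block of $S_1$, where $x+y=\rho$. On the $M$-block of $S_2$ it equals $\int_0^{\rho/2}(\rho-2t)\,{\rm d}t=\rho^2/4$. Hence
\[
\int\varphi_2\,{\rm d}S_2-\int\varphi_2\,{\rm d}S_1=\rho^2/4>0.
\]

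The only delicate point is justifying that the mass of each ordinal sum lies exactly on the diagonal blocks, with the prescribed singular supports there. This follows because the mass in each block is the affine image of the measure of the corresponding summand, and the total block masses $\rho,\mu,\rho$ already sum to $1$. I expect the main obstacle to be the strict constant in Part~(1) rather than Part~(2).
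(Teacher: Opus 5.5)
Your proposal is correct and follows the paper's proof essentially step for step. It uses the same ordinal-sum identification for membership in $\mathcal{C}$ and the same triangle-inequality plus Lipschitz estimate on the middle block (your identity $|u-x|=\rho|2x-1|/\mu<\rho$ is exactly the paper's $\rho|1-2u|<\rho$). It also uses the same two witnesses $(x+y-(2-\rho))^+$ and $(x+y-\rho)^+$, with the labels swapped relative to the paper.

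Your decomposition $(x+y-\rho)^+=(x+y-\rho)+(\rho-x-y)^+$ is a tidier way to run the paper's Step~1. In Part~(1), write the first term as $\rho|1-2A(u,v)|$, with the absolute value. Also note explicitly that the pointwise strict bound gives $d(S_1,A)<3\rho$ because $|S_1-A|$ is continuous on the compact square and therefore attains its supremum.
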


\begin{proof}
\textbf{Part~(1).} The copula property $S_1,S_2\in\mathcal{C}$ follows
from~\cite[Theorem~6, Steps~1--2]{DuranteFernandezIgnazzi2022}.
We prove $d(S_1,A)<3\rho$ (the bound $d(S_2,B)<3\rho$ is analogous).

On the lower-left block $(0,\rho)^2$: both $S_1=(x+y-\rho)^+$ and $A$ lie in
$[0,\rho)$ (since $A\le M=\min(x,y)<\rho$ there), so $|S_1-A|<\rho$.
On the upper-right block $(1-\rho,1)^2$: $S_1=M$ and $A\ge W$, hence
$|S_1-A|=M-A\le M-W=\min(x,y)-(x+y-1)\le 1-\max(x,y)\le\rho$.
On the boundary strips $D$: $S_1=M$ and $|M-A|\le M-W\le\rho$ by a similar
argument.

On the middle block $(\rho,1-\rho)^2$, set $u=(x-\rho)/\mu$, $v=(y-\rho)/\mu$.
By the triangle inequality and the $1$-Lipschitz property of copulas:
\begin{align*}
|S_1(x,y)-A(x,y)|
&=|\rho+\mu A(u,v)-A(\rho+\mu u,\rho+\mu v)| \\
&\le \rho|1-2A(u,v)|
   +\rho\bigl(|1-2u|+|1-2v|\bigr)
 \le 3\rho.
\end{align*}
Since $(x,y)\in(\rho,1-\rho)^2$ implies $u=(x-\rho)/\mu\in(0,1)$ and
$v=(y-\rho)/\mu\in(0,1)$ strictly, we have $|1-2u|<1$ and $|1-2v|<1$,
so the bound on the middle block is strictly less than $3\rho$.
On all remaining parts of $[0,1]^2$ the distance is at most $\rho<3\rho$.
Hence $d(S_1,A)<3\rho$.

\smallskip
\noindent\textbf{Part~(2).} We exhibit two functions in $\Phi_{\rm icx}$ whose integrals yield contradictory inequalities, thereby proving that $S_{1}$ and $S_{2}$ are not $\le_{\rm icx}$-comparable.

\smallskip
\noindent\emph{Step~1: $S_{2}\not\le_{\rm icx} S_{1}$.}
Define
\begin{equation}\label{eq:varphi1}
\varphi_{1}(x,y):=(x+y-\rho)^{+}.
\end{equation}
This function is increasing (non-decreasing in each argument separately) and convex on $\mathbb{R}^{2}$, so
$\varphi_{1}\in\Phi_{\rm icx}$. We claim that the only region contributing to
$$\int_{[0,1]^2}\varphi_1(x,y)\,{\rm d}S_2(x,y)-\int_{[0,1]^2}\varphi_1(x,y)\,{\rm d}S_1(x,y)$$ 
is $(0,\rho)^2$. Indeed:
\begin{itemize}
\item[(a)] Boundary strips where $S_1=S_2=M$. On
$D:=[0,1]^2\setminus\bigl((0,\rho)^2\cup(\rho,1-\rho)^2\cup(1-\rho,1)^2\bigr)$ the construction gives $S_1(x,y)=S_2(x,y)=M(x,y)$, so the measures ${\rm d}S_1$ and ${\rm d}S_2$ coincide on $D$ and contribute $0$ to the difference.

\item[(b)] Block $(\rho,1-\rho)^2$. On this open square $\varphi_1(x,y)=x+y-\rho$ is affine because $x+y>2\rho>\rho$.\ Both $S_1$ and $S_2$ restrict on $[\rho,1-\rho]^2$ to the affine
rescaling of a copula on the unit square (the total mass of $S_i$ on $[\rho,1-\rho]^2$ equals $\mu^2$, since the marginals of $S_i$ are uniform on $[0,1]$ and the interval $[\rho,1-\rho]$ has length $\mu$). More precisely, the marginal distributions of ${\rm d}S_1$ and ${\rm d}S_2$, viewed as Borel measures on $[0,1]^2$, agree on every horizontal and every vertical line, because both $S_1$ and $S_2$ are copulas. For an affine function $\varphi(x,y)=ax+by+c$ and any two copulas $C_1,C_2\in\mathcal{C}$,
\[\int_{[0,1]^2}\varphi(x,y)\,{\rm d}C_1(x,y) = a\cdot\frac12+b\cdot\frac12+c 
= \int_{[0,1]^2}\varphi(x,y)\,{\rm d}C_2(x,y),\]
since both $C_1$ and $C_2$ have uniform marginals, giving 
$\mathbb{E}[X]=\mathbb{E}[Y]=\tfrac{1}{2}$ under either measure.

Hence, on global integrals, $S_1$ and $S_2$ would integrate $\varphi_1$ to the same value if $\varphi_1$ were affine on all of $[0,1]^2$.\ However, $\varphi_1$ is only affine on $\{x+y\ge\rho\}$.\ The way to handle this
correctly is to write
\[
\int_{[0,1]^2}\!\varphi_1(x,y)\,{\rm d}(S_2-S_1)(x,y)=\int_{[0,1]^2}\!(x+y-\rho)\,{\rm d}(S_2-S_1)(x,y)- \int_{\{x+y<\rho\}}(x+y-\rho)\,{\rm d}(S_2-S_1)(x,y).
\]
The first term vanishes by uniformity of marginals (as above).\ The set
$\{x+y<\rho\}$ is contained in $[0,\rho]^2$, and on $[0,\rho]^2\setminus(0,\rho)^2$ we have $S_1=S_2$, so
\begin{eqnarray*}
   \int_{[0,1]^2}\!\varphi_1(x,y)\,{\rm d}(S_2-S_1)(x,y)
   &=& -\int_{(0,\rho)^2\cap\{x+y<\rho\}}\!(x+y-\rho)\,{\rm d}(S_2-S_1)(x,y)\\
   &=& \int_{(0,\rho)^2\cap\{x+y<\rho\}}\!(\rho-x-y)\,{\rm d}(S_2-S_1)(x,y).
\end{eqnarray*}

\item[(c)] Block $(1-\rho,1)^2$. On this block $x+y \ge 2(1-\rho) = 2-2\rho > \rho$ 
(since $\rho < 1/4$), so $\varphi_1(x,y) = x+y-\rho$ is affine. Although 
$S_1 = M \ne S_2$ on this block, both $S_1$ and $S_2$ are copulas with uniform 
marginals, so the affine-marginals argument of block~(b) applies:
\[\int_{(1-\rho,1)^2}\varphi_1(x,y)\,{\rm d}(S_2-S_1) (x,y)= 0.\]
Note that, since $S_1,S_2$ are copulas,
\[\int_{[0,1]^2}(x+y)\,{\rm d}S_i(x,y) = 2\,\mathbb{E}[U] = 1\]
for $i=1,2$, where $U\sim\mathcal{U}(0,1)$, so
\[\int_{[0,1]^2}(x+y-\rho)\,{\rm d}(S_2-S_1)(x,y) = 0,\]
as claimed.
\end{itemize}

Combining the analysis of blocks (a)--(c), and noting that on $(0,\rho)^2$ the copula $S_1$ equals $\rho W(\cdot/\rho,\cdot/\rho)$ while $S_2$ equals $M$, the global affine subtraction yields
\[
\int_{[0,1]^2}\varphi_1(x,y)\,{\rm d}(S_2-S_1)(x,y)
= \int_{(0,\rho)^2}\!\varphi_1(x,y)\,{\rm d}(S_2-S_1)(x,y)- \underbrace{\int_{(0,\rho)^2}\!\bigl(x+y-\rho\bigr)\,{\rm d}(S_2-S_1)(x,y)}_{=:\,R},
\]
where $R$ accounts for the difference between $\varphi_1=(x+y-\rho)^+$ and its
affine extension $x+y-\rho$ on $\{x+y<\rho\}\subset(0,\rho)^2$.\ Equivalently,
\[
\int_{[0,1]^2}\varphi_1(x,y)\,{\rm d}(S_2-S_1)(x,y)
   = \int_{(0,\rho)^2}\!(x+y-\rho)^+\,{\rm d}(S_2-S_1)(x,y)
   + \int_{(0,\rho)^2}\!(\rho-x-y)^+\,{\rm d}(S_2-S_1)(x,y),
\]
using the identity $t^+ + (-t)^+ = |t|$ with $t = x+y-\rho$, which decomposes $\varphi_1 = (x+y-\rho)^+$ into its affine part $(x+y-\rho)$ and the correction term $(\rho-x-y)^+$ supported on 
$\{x+y<\rho\}$.

We compute each piece:
\begin{itemize}
\item Mass of $S_1$ on $(0,\rho)^2$. The copula $S_1$ on
$[0,\rho]^2$ equals $\rho W(x/\rho,y/\rho)$, which is the uniform measure on
the anti-diagonal segment $\{(x,y)\in[0,\rho]^2:x+y=\rho\}$ (a singular
measure of total mass $\rho$). On this segment $\varphi_1=(x+y-\rho)^+=0$, so
$$\int_{(0,\rho)^2}\varphi_1(x,y)\,{\rm d}S_1(x,y)=0.$$

\item Mass of $S_2$ on $(0,\rho)^2$. The copula $S_2$ equals $M$ on $[0,\rho]^2$, the uniform measure on the diagonal $\{(t,t):t\in[0,\rho]\}$ of total mass $\rho$. Hence
\[
\int_{(0,\rho)^2}\!(x+y-\rho)^+\,{\rm d}S_2(x,y)
   = \int_0^\rho (2t-\rho)^+\,{\rm d}t
   = \int_{\rho/2}^\rho (2t-\rho)\,{\rm d}t
   = \frac{\rho^2}{4}.
\]
\end{itemize}

Putting everything together,
\[
   \int_{[0,1]^2}\varphi_1(x,y)\,{\rm d}S_2(x,y) - \int_{[0,1]^2}\varphi_1(x,y)\,{\rm d}S_1(x,y) = \frac{\rho^2}{4} >0,
\]
which proves $S_2\not\le_{icx} S_1$.

\smallskip
\noindent\emph{Step~2: $S_{1}\not\le_{\rm icx} S_{2}$.} Consider
\begin{equation}\label{eq:varphi2}
  \varphi_{2}(x,y) := \bigl[(x+y)-(2-\rho)\bigr]^{+} \in \Phi_{\mathrm{icx}}.
\end{equation}
By an argument symmetric to Step~1, with the lower-left and upper-right corner
blocks interchanged:
\begin{itemize}
\item On $(0,\rho)^{2}$: $x+y<2\rho<2-\rho$, so $\varphi_2=0$.
\item On $(\rho,1-\rho)^{2}$: $x+y<2-2\rho<2-\rho$, so $\varphi_2=0$.
\item On the boundary strips $D$: $S_1=S_2=M$, so the measures coincide
      and the difference is zero.
\item On $(1-\rho,1)^{2}$: $S_1=M$ is supported on the diagonal
      $\{(t,t):t\in[1-\rho,1]\}$; since $\varphi_2(t,t)=(2t-(2-\rho))^+>0$
      only for $t>1-\rho/2$,
      \[
        \int_{(1-\rho,1)^2}\!\varphi_2(x,y)\,{\rm d}S_1(x,y)
        = \int_{1-\rho/2}^{1}(2t-(2-\rho))\,{\rm d}t
        = \frac{\rho^{2}}{4}.
      \]
      $S_2$ on $(1-\rho,1)^{2}$ is the rescaled $W$-block, concentrated on the
      anti-diagonal $\{x+y=2-\rho\}$ where $\varphi_2\equiv 0$; hence
      $$\int_{(1-\rho,1)^2}\varphi_2\,{\rm d}S_2=0.$$
\end{itemize}
Combining,
\[
  \int_{[0,1]^2}\varphi_2(x,y)\,{\rm d}S_1(x,y)
  -\int_{[0,1]^2}\varphi_2(x,y)\,{\rm d}S_2(x,y)
  = \frac{\rho^{2}}{4} > 0,
\]
which proves $S_1\not\le_{\rm icx}S_2$.

Since $S_2 \not\leq_{\mathrm{icx}} S_1$ (Step 1) and $S_1 \not\leq_{\mathrm{icx}} S_2$ (Step 2), the pair $(S_1,S_2)$ is not
$\leq_{\mathrm{icx}}$-comparable, i.e., $(S_1,S_2) \notin \mathcal{Q}$.
\end{proof}

We are in position to prove the main result of this section.

\begin{theorem}\label{thm:icx_nowhere_dense}
The set $\mathcal{Q}$ given by \eqref{eq:Q} is nowhere dense in
$(\mathcal{C}\times\mathcal{C},d\times d)$.
Consequently, $\mathcal{Q}^{c}$ is co-meager.
\end{theorem}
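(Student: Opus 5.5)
Since $\mathcal{Q}$ is closed by Corollary~\ref{cor:Q_closed}, we have $\overline{\mathcal{Q}}=\mathcal{Q}$. Nowhere density therefore reduces to showing that $\mathcal{Q}$ has empty interior in $(\mathcal{C}\times\mathcal{C},d\times d)$. Equivalently, every nonempty open set of the product space must contain a pair outside $\mathcal{Q}$. Because the open balls of $d\times d$ are exactly the products $B_d(A,\varepsilon)\times B_d(B,\varepsilon)$, it suffices to prove the following: for every $(A,B)\in\mathcal{C}\times\mathcal{C}$ and every $\varepsilon>0$, there is $(S_1,S_2)\notin\mathcal{Q}$ with $d(S_1,A)<\varepsilon$ and $d(S_2,B)<\varepsilon$.

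The plan is to take $A,B$ and $\varepsilon$ as given and choose $\rho\in\bigl(0,\tfrac14\bigr)$ with $3\rho\le\varepsilon$, say $\rho:=\min\{\tfrac18,\varepsilon/3\}$. We then build $S_1,S_2$ by the three-block ordinal-sum construction of Lemma~\ref{lem:witnesses}, using this $\rho$. Part~(1) of that lemma gives $S_1,S_2\in\mathcal{C}$ with $d(S_1,A)<3\rho\le\varepsilon$ and $d(S_2,B)<3\rho\le\varepsilon$. Hence $(S_1,S_2)\in B_d(A,\varepsilon)\times B_d(B,\varepsilon)$. Part~(2) gives $(S_1,S_2)\notin\mathcal{Q}$. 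So no ball around any point of $\mathcal{Q}$ (indeed, around any point at all) is contained in $\mathcal{Q}$, and $\operatorname{int}\overline{\mathcal{Q}}=\operatorname{int}\mathcal{Q}=\emptyset$.

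For the consequence, a nowhere dense set is in particular meager, so its complement $\mathcal{Q}^c$ is co-meager by definition. We will also note that $\mathcal{Q}^c$ is open and dense. Since $(\mathcal{C}\times\mathcal{C},d\times d)$ is complete, Theorem~\ref{thm:baire} shows that $\mathcal{Q}^c$ is of second category, so a generic pair is $\le_{\mathrm{icx}}$-incomparable.

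The main obstacle has already been absorbed into Lemma~\ref{lem:witnesses}, namely the explicit witnesses $\varphi_1,\varphi_2\in\Phi_{\mathrm{icx}}$. The only remaining care point is the quantifier bookkeeping: $\rho$ must depend only on $\varepsilon$ and satisfy the lemma's constraint $\rho<\tfrac14$, and the strict inequality $d<3\rho$ must land inside the open ball.
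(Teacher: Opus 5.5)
Your proposal is correct and follows the paper's proof essentially verbatim. Closedness of $\mathcal{Q}$ (Corollary~\ref{cor:Q_closed}) reduces the claim to empty interior, and the ordinal-sum witnesses $(S_1,S_2)$ of Lemma~\ref{lem:witnesses}, with $\rho$ small relative to $\varepsilon$, lie in any prescribed ball while lying outside $\mathcal{Q}$. The only cosmetic differences are that you argue directly rather than by contradiction, and that you take $\rho=\min\{1/8,\varepsilon/3\}$ instead of the paper's $\min(\varepsilon/4,1/5)$; both choices work.
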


\begin{proof}
Since $\mathcal{Q}$ is closed---recall Corollary~\ref{cor:Q_closed}---, it suffices to show that $\mathcal{Q}$ has empty interior. Suppose for contradiction that there exist $(A,B)\in\mathcal{Q}$ and $\varepsilon>0$ such that $B_{d}(A,\varepsilon)\times B_{d}(B,\varepsilon)\subseteq\mathcal{Q}$. Without loss of generality, $A\le_{\rm icx} B$. Choose $\rho := \min(\varepsilon/4, 1/5)$. Since $\varepsilon > 0$ we have 
$\rho > 0$, and $\rho \le 1/5 < 1/4$, so $\rho \in (0,1/4)$. Moreover $\rho \le \varepsilon/4$. By Lemma~\ref{lem:witnesses}:
\begin{itemize}
\item $d(S_1,A)<3\rho\le 3\varepsilon/4<\varepsilon$, so $S_1\in B_d(A,\varepsilon)$;
\item $d(S_2,B)<3\rho\le 3\varepsilon/4<\varepsilon$, so $S_2\in B_d(B,\varepsilon)$;
\item $(S_1,S_2)\notin\mathcal{Q}$.
\end{itemize}
But then $(S_1,S_2)\in B_d(A,\varepsilon)\times B_d(B,\varepsilon)\subseteq\mathcal{Q}$ 
and $(S_1,S_2)\notin\mathcal{Q}$, a contradiction. Hence $\mathcal{Q}$ has empty interior and is nowhere dense. Since $(\mathcal{C}\times\mathcal{C},d\times d)$ is a complete metric space, Theorem~\ref{thm:baire} implies that $\mathcal{Q}^{c}$ is co-meager.
\end{proof}

Let $F_{1},F_{2}$ be continuous univariate distribution functions on $[0,1]$. By Sklar's theorem, the map $C\mapsto C(F_{1},F_{2})$ is a
bijection from $\mathcal{C}$ onto $\mathcal{F}(F_{1},F_{2})$, and weak convergence in $\mathcal{F}(F_{1},F_{2})$ corresponds to
uniform convergence of copulas~\cite{DuranteSempi2016}. We caution that the increasing convex order on
$\mathcal{F}(F_{1},F_{2})$ does \emph{not} reduce to $\le_{\mathrm{icx}}$ on $\mathcal{C}$ in general: the comparison depends on the marginals $F_{1},F_{2}$ and the full joint distribution, not merely on the copulas. By contrast, $\le_{\mathrm{lo}}$ within $\mathcal{F}(F_{1},F_{2})$ is equivalent to the pointwise ordering of the associated copulas, which is why the argument of~\cite{DuranteFernandezIgnazzi2022}
extends cleanly to copulas for  $\le_{\mathrm{lo}}$. For this reason, Theorem~\ref{thm:icx_nowhere_dense} is stated
directly for copulas (uniform marginals), which is the natural setting. Extending the result to general Fr\'{e}chet classes under
$\le_{\mathrm{icx}}$ requires a separate analysis depending on the specific marginals and is left for future work.

\section{Baire category results for convex and stop-loss orders}\label{sec:otherorders}

In this section we establish Baire-category results for two further stochastic orders on $\mathcal C\times\mathcal C$: the bivariate convex order $\le_{\rm cx}$ (without the monotonicity restriction) and the stop-loss order $\le_{\rm sl}$ on the sum $X+Y$ of the components. For each order, the comparability set
\[
   \mathcal P_*:=\{(A,B)\in\mathcal C\times\mathcal C : A\le_* B\text{ or }B\le_* A\}
\]
is shown to be closed and nowhere dense in $(\mathcal C\times\mathcal C,d\times d)$.\
We collect the results in the next propositions.

\begin{proposition}\label{prop:cx}
$\mathcal P_{\rm cx}=\{(A,B):A\le_{\rm cx}B\text{ or }B\le_{\rm cx}A\}$ is closed and
nowhere dense in $(\mathcal C\times\mathcal C,d\times d)$.
\end{proposition}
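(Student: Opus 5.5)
The plan mirrors the treatment of $\le_{\rm icx}$: first closedness, then empty interior via the witnesses of Lemma~\ref{lem:witnesses}.

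\emph{Closedness.} I would repeat the proof of Lemma~\ref{lem:icx_closed} almost verbatim. Take a sequence $(A_n,B_n)$ with $A_n\le_{\rm cx}B_n$ and $(A_n,B_n)\to(A,B)$ in $d\times d$, and fix a convex $\varphi:[0,1]^2\to\mathbb R$. Monotonicity was never used in that proof. A finite convex function on $[0,1]^2$ is continuous on $(0,1)^2$ by \cite[Theorem~10.1]{Rockafellar1970}. It is bounded above by the maximum of its values at the four corners, since every point is a convex combination of them. It is bounded below by an affine minorant (a supporting plane at an interior point). Hence $\varphi$ is bounded, and $\mathrm{Disc}(\varphi)\subseteq\partial[0,1]^2$, which is $\mu_C$-null for every copula $C$. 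Theorem~\ref{th:port}(iv) then lets us pass to the limit, so $A\le_{\rm cx}B$. Closedness of the union with the flipped set follows exactly as in Corollary~\ref{cor:Q_closed}, using the isometry $\sigma$.

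\emph{Empty interior.} The key observation is that $\Phi_{\rm icx}$ is contained in the class of convex functions, so $A\le_{\rm cx}B$ implies $A\le_{\rm icx}B$. Hence $\mathcal P_{\rm cx}\subseteq\mathcal Q$. (Alternatively, the test functions $\varphi_1,\varphi_2$ of \eqref{eq:varphi1}--\eqref{eq:varphi2} are themselves convex, so Step~1 and Step~2 of Lemma~\ref{lem:witnesses} directly show $S_2\not\le_{\rm cx}S_1$ and $S_1\not\le_{\rm cx}S_2$.)

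Given any ball $B_d(A,\varepsilon)\times B_d(B,\varepsilon)$, choose $\rho=\min(\varepsilon/4,1/5)$ and build $S_1,S_2$ as in Lemma~\ref{lem:witnesses}. Then $(S_1,S_2)$ lies in the ball and outside $\mathcal Q\supseteq\mathcal P_{\rm cx}$. So $\mathcal P_{\rm cx}$ has empty interior. Being closed, it is nowhere dense. Indeed, it is a closed subset of the nowhere dense set $\mathcal Q$ from Theorem~\ref{thm:icx_nowhere_dense}, which already suffices.

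\emph{Main obstacle.} The only delicate point is the boundedness and continuity of a general convex (non-monotone) $\varphi$ up to the boundary. Convex functions on a closed square may jump upward at the boundary but cannot blow up. The corner-convex-combination bound together with the affine minorant handles this, and the boundary discontinuities are invisible to copula measures. Everything else is inherited from the $\le_{\rm icx}$ case.
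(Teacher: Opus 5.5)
Your argument is correct, but both halves are shorter than the paper's and organised differently.

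\emph{Closedness.} The paper shows $F_\varphi$ is closed only for \emph{continuous} convex $\varphi$. It then uses a countable uniformly dense family $\Phi_0$ of maxima of rational affine functions (Dini's theorem plus rational approximation) to write $\{A\le_{\rm cx}B\}=\bigcap_{\varphi\in\Phi_0}F_\varphi$. You instead rerun the Portmanteau argument of Lemma~\ref{lem:icx_closed} for an arbitrary convex $\varphi$. Your boundedness argument (corner values above, a subgradient plane at an interior point below) is cleaner, and you correctly note that boundary discontinuities are $\mu_C$-null.

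Your version is in fact the more complete one. The definition of $\le_{\rm cx}$ quantifies over all convex $\varphi$, including ones that jump upward on $\partial[0,1]^2$. The paper's reduction to continuous ones tacitly uses that such a $\varphi$ coincides on $(0,1)^2$ with a continuous convex function; this is true on polytopes (see \cite[Section~10]{Rockafellar1970}) but is not stated. Also, countability of $\Phi_0$ is irrelevant, since any intersection of closed sets is closed.

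\emph{Empty interior.} The paper recomputes the witnesses directly with $\psi_1=(\rho-x-y)^+$ and $\psi_2=\varphi_2$. You instead note that every element of $\Phi_{\rm icx}$ is convex, so $\mathcal P_{\rm cx}\subseteq\mathcal Q$, and inherit nowhere-denseness from Theorem~\ref{thm:icx_nowhere_dense}. That inheritance is legitimate: $S\subseteq\mathcal Q$ gives $\overline{S}\subseteq\overline{\mathcal Q}$, so subsets of nowhere dense sets are nowhere dense. The paper's contrary claim in Remark~\ref{rem:sm-chain} is simply wrong.

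What the paper's direct computation buys is an explicit, self-contained witness. However, $\psi_1=\varphi_1-(x+y-\rho)$, and affine functions integrate identically against every copula. So it is the same computation as your parenthetical alternative via Steps~1--2 of Lemma~\ref{lem:witnesses}.
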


\begin{proof}
For each fixed continuous convex $\varphi:[0,1]^2\to\mathbb{R}$ (necessarily bounded
on the compact set $[0,1]^2$), the set
\[
  F_\varphi := \biggl\{(A,B)\in\mathcal{C}\times\mathcal{C} :
  \int_{[0,1]^2}\varphi(x,y)\,{\rm d}A(x,y) \le \int_{[0,1]^2}\varphi(x,y)\,{\rm d}B(x,y)\biggr\}
\]
is closed in $(\mathcal{C}\times\mathcal{C}, d\times d)$: if $(A_n,B_n)\to(A,B)$
uniformly and $$\int_{[0,1]^2}\varphi(x,y)\,{\rm d}A_n(x,y)\le\int_{[0,1]^2}\varphi(x,y)\,{\rm d}B_n(x,y)$$ for all $n$, then uniform convergence implies weak convergence of the associated doubly stochastic measures (see~\cite{DuranteSempi2016}), and since $\varphi$ is continuous and bounded,
Theorem~\ref{th:port}(iv) gives $$\int_{[0,1]^2}\varphi(x,y)\,{\rm d}A_n(x,y)\longrightarrow\int_{[0,1]^2}\varphi(x,y)\,{\rm d}A(x,y)$$
and
$$\int_{[0,1]^2}\varphi(x,y)\,{\rm d}B_n(x,y)\longrightarrow\int_{[0,1]^2}\varphi(x,y)\,{\rm d}B(x,y);$$ passing to the limit yields $$\int_{[0,1]^2}\varphi(x,y)\,{\rm d}A(x,y)
\le\int_{[0,1]^2}\varphi(x,y)\,{\rm d}B(x,y).$$

Now let $\Phi_0$ be the countable family of all functions of the form
$\varphi(x,y)=\max_{i=1}^k(a_ix+b_iy+c_i)$ with $k\in\mathbb{N}$ and $a_i,b_i,c_i\in\mathbb{Q}$.
Every such $\varphi$ is convex and continuous on $[0,1]^2$. Moreover, $\Phi_0$ is uniformly dense in the cone of continuous convex functions on $[0,1]^2$: by the supremum-of-affine-minorants representation of a closed convex function (\cite[Theorem~12.1]{Rockafellar1970}; see also \cite[Ch.~B, §1.2]{HiriartUrrutyLemarechal2001}), every continuous convex function on the compact set $[0,1]^2$ is the pointwise supremum of its affine minorants, hence, for each $\varepsilon>0$, there exist finitely many affine minorants $h_1,\dots,h_k$ of $\varphi$ (selected from a countable set by compactness) 
such that $0\le\varphi(x,y)-\max_{i}h_i(x,y)\le\varepsilon$ for all 
$(x,y)\in[0,1]^2$; indeed, the net of finite maxima of affine minorants is non-decreasing and converges pointwise to the continuous function $\varphi$ on the compact set $[0,1]^2$, so uniform convergence follows by Dini's theorem~(see, e.g., \cite[Theorem~7.13]{Rudin1976}). By density of $\mathbb{Q}$ in $\mathbb{R}$, replacing coefficients by rational approximations within $\varepsilon/k$ gives the density of $\Phi_0$ in the cone of continuous convex functions on $[0,1]^2$. Since $A\le_{\rm cx}B$ holds if, and only if, $$\int_{[0,1]^2}\varphi(x,y)\,{\rm d}A(x,y)\le\int_{[0,1]^2}\varphi(x,y)\,{\rm d}B(x,y)$$ for every $\varphi\in\Phi_0$ (by density of $\Phi_0$ and the continuity of
$$\varphi\mapsto\int_{[0,1]^2}\varphi(x,y)\,{\rm d}(B-A)(x,y)$$ on the space of continuous functions on $[0,1]^2$), we obtain
\[
  \{(A,B)\in\mathcal{C}\times\mathcal{C}:A\le_{\rm cx}B\} = \bigcap_{\varphi\in\Phi_0} F_\varphi,
\]
a countable intersection of closed sets, hence closed. Defining symmetrically $\{(A,B)\in\mathcal{C}\times\mathcal{C}:B\le_{\rm cx}A\}$ and applying the same argument, both
sets are closed, and therefore
$\mathcal{P}_{\rm cx}=\{(A,B)\in\mathcal{C}\times\mathcal{C}:A\le_{\rm cx}B\}\cup\{(A,B)\in\mathcal{C}\times\mathcal{C}:B\le_{\rm cx}A\}$ is closed as a finite union of closed sets.

Let $(A,B)\in\mathcal{P}_{\rm cx}$ and $\varepsilon>0$ with $B_d(A,\varepsilon)\times B_d(B,\varepsilon)\subseteq\mathcal{P}_{\rm cx}$. Choose $\rho:=\min(\varepsilon/4,1/5)$ and let $S_1,S_2$ be as in
Lemma~\ref{lem:witnesses}; then $d(S_1,A)<3\rho<\varepsilon$ and $d(S_2,B)<3\rho<\varepsilon$. We exhibit two convex functions on $[0,1]^2$ that simultaneously witness $S_2\not\le_{\rm cx}S_1$ and $S_1\not\le_{\rm cx}S_2$. Let
\[
  \psi_1(x,y):=(\rho-x-y)^{+},
\]
which is convex as the composition of $(x,y)\mapsto\rho-x-y$ with
$t\mapsto t^{+}$; and let $\psi_2:=\varphi_2$ be as in~\eqref{eq:varphi2}, which is convex as the composition of $(x,y)\mapsto x+y-(2-\rho)$ with $t\mapsto t^{+}$.

The function $\psi_1$ is supported on $\{x+y\le\rho\}\subset[0,\rho]^2$. On this region, $S_1=\rho W(x/\rho,y/\rho)$, supported on the anti-diagonal $\{x+y=\rho\}$ on which $\psi_1=(\rho-\rho)^+=0$ identically; hence
\[
  \int_{[0,1]^2}\psi_1(x,y)\,{\rm d}S_1(x,y)=0.
\]
On the same region, $S_2=M$, supported on the diagonal $\{(t,t):t\in[0,\rho]\}$; on it $\psi_1(t,t)=(\rho-2t)^+$, so
\[
  \int_{[0,1]^2}\psi_1(x,y)\,{\rm d}S_2(x,y)
  =\int_0^{\rho/2}(\rho-2t)\,{\rm d}t
  =\frac{\rho^2}{4}.
\]
Hence $$\int_{[0,1]^2}\psi_1(x,y)\,{\rm d}(S_2-S_1)(x,y)=\frac{\rho^2}{4}>0,$$
so $S_2\not\le_{\rm cx}S_1$.

The function $\psi_2$ is supported on $\{x+y\ge 2-\rho\}$, which within $[0,1]^2$ meets only the upper-right block $[1-\rho,1]^2$ (and the boundary strips where $S_1=S_2=M$). On $[1-\rho,1]^2$, $S_2$ is the rescaled $W$-block
concentrated on the anti-diagonal $\{x+y=2-\rho\}$, where $\psi_2\equiv 0$; hence $\int_{[0,1]^2}\psi_2\,{\rm d}S_2=0$. On the same block $S_1=M$, supported on the diagonal $\{(t,t):t\in[1-\rho,1]\}$, giving
\[
  \int_{[0,1]^2}\psi_2(x,y)\,{\rm d}S_1(x,y)
  =\int_{1-\rho/2}^{1}(2t-(2-\rho))\,{\rm d}t
  =\frac{\rho^{2}}{4}.
\]
Hence $$\int_{[0,1]^2}\psi_2(x,y)\,{\rm d}(S_1-S_2)(x,y)=\frac{\rho^2}{4}>0,$$
so $S_1\not\le_{\rm cx}S_2$.

Combining, $(S_1,S_2)\notin\mathcal{P}_{\rm cx}$. But $S_1\in B_d(A,\varepsilon)$
and $S_2\in B_d(B,\varepsilon)$, so
$(S_1,S_2)\in B_d(A,\varepsilon)\times B_d(B,\varepsilon)
\subseteq\mathcal{P}_{\rm cx}$, a contradiction. Hence
$\mathring{\mathcal{P}}_{\rm cx}=\emptyset$. Since $\mathcal{P}_{\rm cx}$ is closed and has empty interior, it is nowhere dense.
\end{proof}

The following proposition shows that comparability with respect to the stop-loss order is a highly exceptional phenomenon in the space of copulas.

\begin{proposition}\label{prop:lorenz}
The set $\mathcal P_{\rm sl}=\{(A,B):A\le_{\rm sl}B\text{ or }B\le_{\rm sl}A\}$ is closed and
nowhere dense in $(\mathcal C\times\mathcal C,d\times d)$.
\end{proposition}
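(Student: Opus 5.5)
We follow the same two-step scheme as for Proposition~\ref{prop:cx}: first closedness, then empty interior via the witnesses of Lemma~\ref{lem:witnesses}.

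\emph{Closedness.} For fixed $t\in\mathbb{R}$, the function $g_t(x,y):=(x+y-t)^+$ is continuous and bounded on $[0,1]^2$. So the set
\[
G_t:=\Bigl\{(A,B):\int_{[0,1]^2} g_t\,{\rm d}A\le \int_{[0,1]^2} g_t\,{\rm d}B\Bigr\}
\]
is closed in $(\mathcal C\times\mathcal C,d\times d)$. This follows exactly as for $F_\varphi$ in the proof of Proposition~\ref{prop:cx}: uniform convergence of copulas gives weak convergence, and Theorem~\ref{th:port}(iv) applies. Then $\{A\le_{\rm sl}B\}=\bigcap_{t\in\mathbb{R}}G_t$. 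Any intersection of closed sets is closed, so no countable reduction is needed. If one prefers, the intersection can be restricted to $t\in\mathbb{Q}\cap[0,2]$. This uses continuity of $t\mapsto \mathbb{E}[(S-t)^+]$ (it is $1$-Lipschitz). It also uses that for $t<0$ both sides equal $1-t$, since $\mathbb{E}S=1$, and that for $t>2$ both vanish. The reversed set is the image of this set under the isometry $\sigma(A,B)=(B,A)$, exactly as in Corollary~\ref{cor:Q_closed}. Hence $\mathcal P_{\rm sl}$ is a union of two closed sets and is closed.

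\emph{Empty interior.} Suppose $B_d(A,\varepsilon)\times B_d(B,\varepsilon)\subseteq\mathcal P_{\rm sl}$. Take $\rho:=\min(\varepsilon/4,1/5)$ and let $S_1,S_2$ be as in Lemma~\ref{lem:witnesses}. Part~(1) of that lemma places $(S_1,S_2)$ in the ball. It remains to show that $(S_1,S_2)\notin\mathcal P_{\rm sl}$, and the witnesses already computed are functions of $x+y$ alone.

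For $S_1\not\le_{\rm sl}S_2$, we use $t=2-\rho$. Then $g_{2-\rho}=\varphi_2=\psi_2$, and Step~2 of Lemma~\ref{lem:witnesses} gives
\[
\mathbb{E}[(S_{S_1}-(2-\rho))^+]-\mathbb{E}[(S_{S_2}-(2-\rho))^+]=\rho^2/4>0.
\]
For $S_2\not\le_{\rm sl}S_1$, we use $t=\rho$. Then $g_\rho=\varphi_1$, and Step~1 of the lemma gives
\[
\mathbb{E}[(S_{S_2}-\rho)^+]-\mathbb{E}[(S_{S_1}-\rho)^+]=\rho^2/4>0.
\]
Both inequalities are strict in the required directions, so the pair is incomparable under $\le_{\rm sl}$. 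This contradicts the assumed inclusion, so $\mathcal P_{\rm sl}$ is closed with empty interior and hence nowhere dense.

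\emph{Expected obstacle.} The only delicate point is to make sure the two integral computations transfer verbatim. For Step~1 this is easiest through the $\psi_1$-computation in Proposition~\ref{prop:cx}, using the identity $(x+y-\rho)^+=(x+y-\rho)+(\rho-x-y)^+$. The affine part integrates to $0$ against $S_2-S_1$ because both copulas have uniform marginals, so the difference reduces to $\int\psi_1\,{\rm d}(S_2-S_1)=\rho^2/4$. This route avoids any reliance on the block-by-block bookkeeping. No further construction is needed beyond Lemma~\ref{lem:witnesses}.
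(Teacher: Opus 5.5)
Your proof is correct and is essentially the paper's: closedness by applying Portmanteau to the bounded continuous integrands $(x+y-t)^+$, and empty interior by testing the witnesses $S_1,S_2$ of Lemma~\ref{lem:witnesses} at $t=2-\rho$ and $t=\rho$, each giving a gap of $\rho^2/4$. The only difference is economy. The paper recomputes both gaps via $\mathbb{E}[(S-t)^+]=\int_t^2\mathbb{P}(S>s)\,{\rm d}s$ and a block-by-block analysis of $\Delta(s)$, and it restricts to rational $t$, which, as you note, is unnecessary since any intersection of closed sets is closed. You instead observe that $(x+y-\rho)^+=\varphi_1$ and $(x+y-(2-\rho))^+=\varphi_2$, and reduce Step~1 to the $\psi_1$ computation by cancelling the affine part, which is also a cleaner justification than the decomposition displayed in Step~1 of the lemma.
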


\begin{proof}
The map $\Phi:\mathcal C\to\mathcal P([0,2])$, $C\mapsto\mathrm{Law}(X+Y)$ where $(X,Y)\sim C$, is continuous in the uniform metric on $\mathcal C$ (codomain endowed with weak topology): if $C_n\to C$ uniformly, then the corresponding probability measures $\mu_{C_n}$ converge weakly to $\mu_C$ (since uniform convergence of bivariate copulas implies weak convergence of the induced doubly-stochastic measures \cite{DuranteSempi2016}), and weak convergence is preserved under the continuous push-forward $(x,y)\mapsto x+y$.\ Now, for each $t\in\mathbb R$, the functional $C\mapsto\mathbb E[(S_C-t)^+]$ is continuous (the integrand is bounded continuous on $[0,2]$).\ Hence $\{(A,B)\in\mathcal{C}\times\mathcal{C}:\mathbb E[(S_A-t)^+]\le\mathbb E[(S_B-t)^+]\text{ for all }t\in\mathbb Q\}$ is closed (countable intersection of closed sets), and equals $\{(A,B)\in\mathcal{C}\times\mathcal{C}:A\le_{\rm sl}B\}$ by density of $\mathbb Q$ and continuity of $t\mapsto\mathbb E[(S-t)^+]$, which is convex and hence continuous in $t$.

Take $(A,B)$ with $A\le_{\rm sl}B$ and
$B_d(A,\varepsilon)\times B_d(B,\varepsilon)\subseteq\mathcal P_{\rm sl}$;
$\rho := \min(\varepsilon/4, 1/5) < 1/4$; let $S_1, S_2$ be as in 
Lemma~\ref{lem:witnesses}, so $d(S_1, A) < 3\rho < \varepsilon$ and $d(S_2,B) < 3\rho < \varepsilon$. We show
$(S_1,S_2)\notin\mathcal P_{\rm sl}$ by computing the stop-loss transforms of
$S_{S_1}$ and $S_{S_2}$ at two carefully chosen levels.

For every $C \in \mathcal{C}$, the law of $S_C = X+Y$ under $(X,Y) \sim C$ 
satisfies $F_{S_C}(t) = \mu_C(\{(x,y) : x+y \le t\})$. By Fubini's theorem,
\begin{equation*}
\mathbb{E}\bigl[(S_C - t)^+\bigr] = \int_t^2 \bigl[1 - F_{S_C}(s)\bigr] \, {\rm d}s = \int_t^2\!\!\! \int_{[0,1]^2} \mathbf{1}_{\{x+y > s\}} \, {\rm d}C(x,y) \, {\rm d}s
\end{equation*}
(where $\mathbf{1}_A$ denotes the indicator functions for event $A$), which is the standard stop-loss tail-integration identity~\cite[Chapter 3]{DenuitDhaeneGoovaertsKaas2005}.

\smallskip
\noindent\emph{Step A: $S_1\not\le_{\rm sl}S_2$.}\ Choose $t=2-\rho$. We compute the pushforwards of $S_1$ and $S_2$ under the addition map $T:(x,y)\mapsto x+y$, restricted to the upper-right corner block $[1-\rho,1]^2$, and use the standard identity $$\mathbb{E}[(S_C-t)^+]=\int_t^{2}\mathbb{P}(S_C>s)\,{\rm d}s.$$

The restriction of $S_1=M$ to $[1-\rho,1]^2$ is the law of $(T,T)$ with $T$ uniform on $[1-\rho,1]$, scaled by total mass $\rho$; its $T$-pushforward is uniform on $[2-2\rho,2]$ with density $1/2$ and total mass $\rho$. Outside $[1-\rho,1]^2$, the copula $S_1$ equals $M$ on the boundary strips $D$ and scaled copies of $A$ and $W$ on the blocks $(\rho,1-\rho)^2$ and $(0,\rho)^2$, respectively. On $(\rho,1-\rho)^2$, $x+y<2(1-\rho)=2-2\rho$; on $(0,\rho)^2$, $x+y<2\rho<2-\rho$; on the boundary strips $D$, $M(x,y)=\min(x,y)$ with at least one coordinate $\le 1-\rho$, giving $x+y\le 1+(1-\rho)=2-\rho$ with equality only 
on the boundary of $[1-\rho,1]^2$. Hence for $s\in(2-\rho,2]$ the event $\{S_{S_1}>s\}$ is determined solely by the restriction of $S_1$ to $(1-\rho,1)^2$, where $S_1=M$ gives $$\mathbb{P}(S_{S_1}>s)=\frac12(2-s)$$ for $s\in[2-2\rho,2]$, and
\[\mathbb{E}\bigl[(S_{S_1}-(2-\rho))^+\bigr]=\int_{2-\rho}^{2}\frac12(2-s)\,{\rm d}s=\frac{\rho^{2}}{4}.
\]
The restriction of $S_2$ to $[1-\rho,1]^2$, namely $(1-\rho)+\rho W\!\bigl((\,\cdot\,+\rho-1)/\rho,(\,\cdot\,+\rho-1)/\rho\bigr)$, is supported on the antidiagonal $\{x+y=2-\rho\}$ with total mass $\rho$; its T-pushforward is $\rho\,\delta_{2-\rho}$ ---the Dirac point mass at $2-\rho$, scaled by the total block mass $\rho$, since the rescaled $W$-block on $[1-\rho,1]^2$ concentrates all its mass on the
anti-diagonal $\{x+y=2-\rho\}$---, while the mass of $S_2$ outside $[1-\rho,1]^2$ lies entirely in $(0,\rho)^2\cup(\rho,1-\rho)^2$ (where $x+y<2-2\rho<2-\rho$ in both cases),
since $\mu_{S_2}(D)=0$ for the boundary strips $D$, but the $M$-mass there lies on $\{\min(x,y)=x+y-\max(x,y)\}$ with at least one coordinate $\le 1-\rho$, hence $x+y\le 2-\rho$). The $W$-block of $S_2$ on $[1-\rho,1]^2$ concentrates all its mass on $\{x+y=2-\rho\}$, so $\mathbb{P}(S_{S_2}>s)=0$ for every $s>2-\rho$.

Therefore $\mathbb{E}[(S_{S_1}-(2-\rho))^+]-\mathbb{E}[(S_{S_2}-(2-\rho))^+]=\rho^{2}/4>0$, which proves $S_1\not\le_{\mathrm{\rm sl}}S_2$.

\smallskip
\noindent\emph{Step B:  $S_2\not\le_{\mathrm{\rm sl}}S_1$.} Setting 
$\Delta(s):=\mathbb{P}(S_{S_2}>s)-\mathbb{P}(S_{S_1}>s)$, we compute the difference at level $t=\rho$ as
\[
\mathbb{E}[(S_{S_2}-\rho)^+]-\mathbb{E}[(S_{S_1}-\rho)^+]=\int_{\rho}^{2}\Delta(s)\,{\rm d}s.
\]
Recalling that $\rho<1/4$ so that $2\rho<1/2<2-2\rho$, we identify $\Delta$ piecewise.

On $[0,\rho]^2$, the $T$-pushforward of $S_1=\rho W(\cdot/\rho,\cdot/\rho)$ is $\rho\,\delta_{\rho}$, while that of $S_2=M$ is the uniform distribution on $[0,2\rho]$ of density $1/2$ and mass $\rho$. For $s\ge\rho$ the atom contributes $0$; the uniform contributes $\frac12(2\rho-s)^+$. Thus this block contributes $+\frac12(2\rho-s)$ to $\Delta(s)$ on $[\rho,2\rho]$ and $0$ elsewhere on $[\rho,2]$.

On $[1-\rho,1]^2$, the $T$-pushforward of $S_1=M$ is uniform on $[2-2\rho,2]$ of density $1/2$ and mass $\rho$, while that of $S_2$ is $\rho\,\delta_{2-\rho}$. On $[2-2\rho,2-\rho)$ the uniform contributes $\frac12(2-s)$ to $S_1$ while the atom contributes $\rho$ to $S_2$, giving a net $\rho-\frac12(2-s)$ to $\Delta(s)$; on $[2-\rho,2]$ the atom contributes $0$ while the uniform contributes $\frac12(2-s)$, giving $-\frac12(2-s)$.

On the middle block $(\rho,1-\rho)^2$, although $S_1$ and $S_2$ generally differ
(they embed $A$ and $B$ respectively), the integrand $x+y-\rho$ is affine there
(since $x+y\ge 2\rho>\rho$), and both $S_1$ and $S_2$ have uniform marginals.
By the ordinal-sum structure, ${\rm d}\mu_{S_i}$ on $[\rho,1-\rho]^2$ equals $\mu$ times
the pushforward of $\mu_{C_i}$ ($C_1=A$, $C_2=B$) via $T\colon(u,v)\mapsto
(\rho+\mu u,\rho+\mu v)$; hence
\begin{align*}
  \int_{[\rho,1-\rho]^2}(x+y-\rho)\,{\rm d}\mu_{S_i}
  &=\mu\!\int_{[0,1]^2}\!\bigl(\rho+\mu u+\rho+\mu v-\rho\bigr)\,{\rm d}\mu_{C_i}(u,v)\\
  &=\mu\bigl[\rho+\mu\,\mathbb{E}_{C_i}[U+V]\bigr]
   =\mu(1-\rho),
\end{align*}
using the fact that $\mathbb{E}_{C_i}[U+V]=1$ for any copula with uniform marginals. Since this value is the same for $i=1$ and $i=2$, the middle block contributes $0$ to the integral
$$\int_\rho^2\Delta(s)\,{\rm d}s.$$

Using the disjointness of the supports above ($[\rho,2\rho]\cap[2-2\rho,2]=\emptyset\text{ since }2\rho<2-2\rho$ if, and only if, $\rho<1/2, \text{ which holds as }\rho<1/4$),
\begin{align*}
\int_{\rho}^{2}\Delta(s)\,{\rm d}s
&=\int_{\rho}^{2\rho}\!\frac12(2\rho-s)\,{\rm d}s
+\int_{2-2\rho}^{2-\rho}\!\biggl(\rho-\frac12(2-s)\biggr)\,{\rm d}s
-\int_{2-\rho}^{2}\!\frac12(2-s)\,{\rm d}s\\
&=\frac{\rho^{2}}{4}+\frac{\rho^{2}}{4}-\frac{\rho^{2}}{4}=\frac{\rho^{2}}{4}>0.
\end{align*}
Hence $\mathbb{E}[(S_{S_2}-\rho)^+]-\mathbb{E}[(S_{S_1}-\rho)^+]=\rho^{2}/4>0$, giving $S_2\not\le_{\mathrm{\rm sl}}S_1$.

Combining Steps A and B, $(S_1,S_2)\notin\mathcal P_{\rm sl}$.\ Hence
$\mathring{\mathcal P_{\rm sl}}=\emptyset$, and since $\mathcal P_{\rm sl}$ is closed,
it is nowhere dense.
\end{proof}

\begin{remark}\label{rem:S1-S2-corner-structure}
By construction, the behaviour of $S_1$ and $S_2$ on both corner blocks is fully determined independently of the inner copulas $A$ and $B$: on $(0,\rho)^2$, $S_1$ places a rescaled $W$-block while $S_2$ places a rescaled $M$-block, and on $(1-\rho,1)^2$ the roles are reversed. On the middle block $(\rho,1-\rho)^2$, $S_1$ is a rescaled copy of $A$ and $S_2$ is a rescaled copy of $B$, so in general $S_1\neq S_2$ there. The test function $\varphi_2$ given by~\eqref{eq:varphi2} is supported on
$\{x+y\ge 2-\rho\}$, which within $[0,1]^2$ meets only the upper corner block
$(1-\rho,1)^2$; hence $\int_{[0,1]^2}\varphi_2\,{\rm d}S_i$ depends only on the
restriction of $S_i$ to that block, and the middle block plays no role. The test function $\varphi_1$ given by~\eqref{eq:varphi1} is supported on $\{x+y\ge\rho\}$; its contribution to 
$$\int_{[0,1]^2}\varphi_1(x,y)\,{\rm d}(S_2-S_1)(x,y)$$ comes entirely from the lower corner block $(0,\rho)^2$, because the integrals over the middle block $(\rho,1-\rho)^2$ and the upper block $(1-\rho,1)^2$ vanish (the former by the affine-marginals argument, the latter by the same argument applied to block~(c) in the proof of Lemma~\ref{lem:witnesses}). In both cases the inner copulas $A$ and $B$ play no role.
\end{remark}

\begin{remark}\label{rem:sm-chain}
The supermodular order $\le_{\rm sm}$ on bivariate copulas admits the same Baire-category conclusion as $\le_{\rm icx}$, $\le_{\rm cx}$ and $\le_{\rm sl}$. Indeed, in dimension two and for distributions with identical (here uniform on $[0,1]$) marginals, the supermodular order coincides with the reverse (or dual) of lower orthant order; 
see~\cite{Cambanis1976,MuellerStoyan2002,Tchen1980}. Consequently, the comparability set 
$\mathcal{P}_{\rm sm}:=\{(A,B)\in\mathcal{C}\times\mathcal{C}: 
A\le_{\rm sm}B\text{ or }B\le_{\rm sm}A\}$ 
satisfies $\mathcal{P}_{\rm sm}=\mathcal{P}_{\rm lo}$, where $\mathcal{P}_{\rm lo}$ 
is the lower-orthant comparability set. By Theorem~\ref{thm:DFI_lo}, 
$\mathcal{P}_{\rm lo}$ is closed and nowhere dense in 
$(\mathcal{C}\times\mathcal{C},d\times d)$, hence so is $\mathcal{P}_{\rm sm}$. 
Closedness of $\mathcal{P}_{\rm sm}$ also follows directly from the closure of 
$\le_{\rm sm}$ under weak convergence~\cite{muller-scarsini-2000}.

The stochastic orders studied in this paper are moreover related by the following 
chain of implications for any $A,B\in\mathcal{C}$:
\begin{equation}\label{eq:chain}
B\le_{\rm lo}A\iff A\le_{\rm sm}B\implies A\le_{\rm sl}B\iff S_A\le_{\rm icx}S_B,
\end{equation}
where the first equivalence is the identity $\le_{\rm sm}=\geq_{\rm lo}$ just 
recalled, and the last equivalence holds because $\mathbb{E}[S_A]=\mathbb{E}[S_B]=1$ 
(equal means, see Remark~\ref{rem:sl}), under which the stop-loss and increasing convex orders on the sums coincide~\cite[Theorem~3.A.1]{ShakedShanthikumar2007}. The implication in the middle follows because $(x,y)\mapsto(x+y-t)^+$ is supermodular for every $t\in\mathbb{R}$, so $A\le_{\rm sm}B$ implies 
$\mathbb{E}[(S_A-t)^+]\le\mathbb{E}[(S_B-t)^+]$ for all $t\in\mathbb{R}$; 
see~\cite[Chapter 7]{DenuitDhaeneGoovaertsKaas2005}.

Chain~\eqref{eq:chain} gives the inclusion 
$\mathcal{P}_{\rm lo}=\mathcal{P}_{\rm sm}\subseteq\mathcal{P}_{\rm sl}$, 
which is strict in general: for example, $A = \Pi$ ---the independence copula, i.e., $\Pi(x,y)=xy$ for all $(x,y)$ in $[0,1]^2$--- 
and $B = M$ satisfy $\Pi \le_{\rm sm} M$ (equivalently $M \leq_{\rm lo} \Pi$), but one can construct pairs 
ordered under $\le_{\rm sl}$ that are not ordered under $\le_{\rm lo}$; 
see~\cite[Ch.~7]{DenuitDhaeneGoovaertsKaas2005}. All three comparability sets 
$\mathcal{P}_{\rm lo}$, $\mathcal{P}_{\rm sm}$ and $\mathcal{P}_{\rm sl}$ 
are thus closed and nowhere dense in $(\mathcal{C}\times\mathcal{C},d\times d)$.

We stress that the inclusion $\mathcal{P}_{\rm sm}\subseteq\mathcal{P}_{\rm sl}$ 
does \emph{not} yield nowhere-denseness of $\mathcal{P}_{\rm sm}$ as a consequence 
of Proposition~\ref{prop:lorenz}: a subset of a nowhere dense set need not be 
nowhere dense. The correct argument for nowhere-denseness of $\mathcal{P}_{\rm sm}$ 
is via the equality $\mathcal{P}_{\rm sm}=\mathcal{P}_{\rm lo}$ and the result 
of~\cite[Theorem~6]{DuranteFernandezIgnazzi2022}.
\end{remark}

The following result gathers the previous statements into a single general framework. It establishes that, from the viewpoint of Baire category, incomparability is the prevailing behavior for pairs of copulas under all three stochastic orders considered in this work.

\begin{theorem}\label{thm:omnibus34}
For each $*\in\{\mathrm{icx},\mathrm{cx},\mathrm{sl}\}$, the comparability set $\mathcal{P}_*$ is closed and nowhere dense in $(\mathcal{C}\times\mathcal{C},d\times d)$, and its complement $\mathcal{P}_*^c$ is open and co-meager. Consequently, the set $\bigl(P_{\mathrm{icx}}\cup P_{\mathrm{cx}}\cup P_{\mathrm{sl}}\bigr)^{c}$ is co-meager in $(\mathcal{C}\times\mathcal{C},d\times d)$, i.e., a topologically generic pair of bivariate copulas is incomparable in 
$\le_{\mathrm{icx}}$, $\le_{\mathrm{cx}}$, $\le_{\mathrm{sl}}$, $\le_{\mathrm{lo}}$ 
and $\le_{\mathrm{sm}}$ simultaneously.
\end{theorem}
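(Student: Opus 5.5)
The theorem just assembles the earlier results, so the plan is bookkeeping plus one elementary topological fact.

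First, for each $*\in\{\mathrm{icx},\mathrm{cx},\mathrm{sl}\}$ I take closedness and nowhere-denseness of $\mathcal{P}_*$ from the results already proved. For $*=\mathrm{icx}$ these are Corollary~\ref{cor:Q_closed} and Theorem~\ref{thm:icx_nowhere_dense}, since $\mathcal{P}_{\mathrm{icx}}=\mathcal{Q}$. For $*=\mathrm{cx}$ this is Proposition~\ref{prop:cx}, and for $*=\mathrm{sl}$ it is Proposition~\ref{prop:lorenz}. The complement $\mathcal{P}_*^c$ is then open because it is the complement of a closed set. It is co-meager by definition, since its complement $\mathcal{P}_*$ is nowhere dense and hence meager.

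Second, for the simultaneous statement I use that a finite union of nowhere dense sets is nowhere dense. To see this for two sets, let $N_1,N_2$ be closed with empty interior and let $U\neq\emptyset$ be open. Then $U\setminus N_1$ is nonempty and open, so $(U\setminus N_1)\setminus N_2$ is nonempty, and therefore $U\not\subseteq N_1\cup N_2$. Applying this twice shows that $\mathcal{P}_{\mathrm{icx}}\cup\mathcal{P}_{\mathrm{cx}}\cup\mathcal{P}_{\mathrm{sl}}$ is closed and nowhere dense. Its complement is therefore open, dense and co-meager in the complete metric space $(\mathcal{C}\times\mathcal{C},d\times d)$. By Theorem~\ref{thm:baire}, this complement is also of second category, so "generic" is a meaningful notion here.

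Third, to add $\le_{\mathrm{lo}}$ and $\le_{\mathrm{sm}}$, I include $\mathcal{P}_{\mathrm{lo}}=\mathcal{P}_{\mathrm{sm}}$ in the union. This set is closed and nowhere dense by Theorem~\ref{thm:DFI_lo} together with Remark~\ref{rem:sm-chain}. A shortcut is also available: chain~\eqref{eq:chain} gives $\mathcal{P}_{\mathrm{lo}}\subseteq\mathcal{P}_{\mathrm{sl}}$, so any pair outside $\mathcal{P}_{\mathrm{sl}}$ is automatically outside $\mathcal{P}_{\mathrm{lo}}$ and $\mathcal{P}_{\mathrm{sm}}$. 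Consequently the co-meager set $(\mathcal{P}_{\mathrm{icx}}\cup\mathcal{P}_{\mathrm{cx}}\cup\mathcal{P}_{\mathrm{sl}})^c$ already consists of pairs incomparable in all five orders.

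There is no real obstacle in this proof. The only point needing care is to invoke finite unions of nowhere dense sets, or equivalently countable unions of meager sets, rather than an inclusion argument. As Remark~\ref{rem:sm-chain} stresses, a subset of a nowhere dense set yields nothing for the subset's own nowhere-denseness. Here, however, the inclusion is used only in the harmless direction, namely to enlarge the incomparable set.
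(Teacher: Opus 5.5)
Your proposal is correct and follows the paper's proof. You cite Corollary~\ref{cor:Q_closed}, Theorem~\ref{thm:icx_nowhere_dense} and Propositions~\ref{prop:cx}, \ref{prop:lorenz} for the three orders and combine finitely many closed nowhere dense sets, which is just the dual form of the paper's appeal to finite intersections of co-meager sets; your optional shortcut via the inclusion $\mathcal{P}_{\mathrm{lo}}=\mathcal{P}_{\mathrm{sm}}\subseteq\mathcal{P}_{\mathrm{sl}}$ from \eqref{eq:chain} is also valid, and it justifies the ``i.e.'' in the statement more directly than the paper does, since the paper instead invokes Theorem~\ref{thm:DFI_lo} and intersects five co-meager sets.
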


\begin{proof}
The first claim follows from Theorem~\ref{thm:icx_nowhere_dense}
(for $\le_{\mathrm{icx}}$) together with
Propositions~\ref{prop:cx} and~\ref{prop:lorenz} (for $\le_{\mathrm{cx}}$ and $\le_{\mathrm{sl}}$). Nowhere-denseness of $\mathcal{P}_{\mathrm{lo}}$ is Theorem~\ref{thm:DFI_lo}, and nowhere-denseness of $\mathcal{P}_{\mathrm{sm}}$ follows from the equality
$\mathcal{P}_{\mathrm{sm}}=\mathcal{P}_{\mathrm{lo}}$ established in Remark~\ref{rem:sm-chain}.
For the genericity statement, the complement of each of the five comparability sets is open and dense, hence co-meager; the intersection of finitely many co-meager sets in a Baire space is
co-meager~\cite[Theorem~9.1]{Oxtoby1980}.
\end{proof}

\section{Conclusions}\label{sec:conc}

In this paper we have studied the topological size of certain families of stochastic orders on the space $\mathcal{C}$ of bivariate copulas, equipped with the uniform metric $d$.
Our main contributions are the following. First, we proved that the set $\mathcal{Q}$ of pairs of bivariate copulas that are comparable under $\le_{\mathrm{icx}}$ is nowhere dense in $(\mathcal C\times\mathcal C,d\times d)$; consequently, $\mathcal{Q}^c$ is co-meager, so a topologically generic pair of copulas is not $\le_{\mathrm{icx}}$-comparable. Second, we extended the same conclusion to the bivariate convex order $\le_{\mathrm{\rm cx}}$ and the stop-loss order $\le_{\mathrm{\rm sl}}$ on the sum $X+Y$, and we showed that a topologically generic pair of bivariate copulas is incomparable in $\leq_{\mathrm{icx}}$, $\leq_{\mathrm{cx}}$, and $\leq_{\mathrm{sl}}$ simultaneously.

These results extend the Baire-category programme initiated
in~\cite{DuranteFernandezIgnazzi2022} to genuinely bivariate comparison problems. The key new difficulties---compared to the lower orthant order treated in~\cite{DuranteFernandezIgnazzi2022}---are the non-pointwise nature of $\le_{\mathrm{icx}}$ and the fact that the affine elements of $\Phi_{\mathrm{icx}}$ integrate to the same value for all copulas with uniform marginals, making it necessary to identify non-affine test functions that genuinely capture the dependence 
structure. These are overcome by constructing explicit non-comparable witnesses via ordinal 
sums of the Fréchet--Hoeffding bounds $W$ and $M$, and by exhibiting specific test functions $\varphi_1,\varphi_2\in\Phi_{\rm icx}$ whose integrals over the corner blocks of the ordinal sum yield contradictory inequalities.

\section*{Acknowledgemnets}
The authors are grateful to Prof.\ Juan Fern\'{a}ndez-S\'{a}nchez for drawing their attention to this line of research.

\end{document}